\theoremstyle{plain}
\newtheorem{theorem}                {Theorem}      [section]
\newtheorem{proposition}  [theorem]  {Proposition}
\newtheorem{corollary}    [theorem]  {Corollary}
\newtheorem{lemma}        [theorem]  {Lemma}
\theoremstyle{definition}
\newtheorem{example}      [theorem]  {Example}
\newtheorem{remark}       [theorem]  {Remark}
\def \R{{\mathbb R}}
\def \s{{\mathbb S}}
\def \n{{\mathbb N}}
\def \H{{\mathbb H}}
\def \C{{\mathbb C}}
\def \link {~}
\def \1 {\`}
\DeclareMathOperator{\grad}{grad}
\DeclareMathOperator{\trace}{Tr}
\numberwithin{equation}{section}
\title[Polyharmonic  pseudo-Riemannian hypersurfaces]{Polyharmonic hypersurfaces into pseudo-Riemannian space forms}
\author{V. Branding}
\address{University of Vienna, Faculty of Mathematics\\
Oskar-Morgenstern-Platz 1, 1090 Vienna, Austria}
\email{volker.branding@univie.ac.at}
\author{S.~Montaldo}
\address{Universit\`a degli Studi di Cagliari\\
Dipartimento di Matematica e Informatica\\
Via Ospedale 72\\
09124 Cagliari, Italia}
\email{montaldo@unica.it}
\author{C.~Oniciuc}
\address{Faculty of Mathematics\\ ``Al.I. Cuza'' University of Iasi\\
Bd. Carol I no. 11 \\
700506 Iasi, Romania}
\email{oniciucc@uaic.ro}
\author{A.~Ratto}
\address{Universit\`a degli Studi di Cagliari\\
Dipartimento di Matematica e Informatica\\
Via Ospedale 72\\
09124 Cagliari, Italia}
\email{rattoa@unica.it}
\begin{document}
\begin{abstract}
In this paper we shall assume that the ambient manifold is a pseudo-Riemannian space form $N^{m+1}_t(c)$ of dimension $m+1$ and index $t$ ($m\geq2$ and $1 \leq t\leq m$). We shall study hypersurfaces $M^{m}_{t'}$ which are polyharmonic of order $r$ (briefly, $r$-harmonic), where $r\geq 3$ and either $t'=t$ or $t'=t-1$. Let $A$ denote the shape operator of $M^{m}_{t'}$. Under the assumptions that $M^{m}_{t'}$ is CMC and $\trace A^2$ is a constant, we shall obtain the general condition which determines that $M^{m}_{t'}$ is $r$-harmonic. As a first application, we shall deduce the existence of several new families of proper $r$-harmonic hypersurfaces with diagonalizable shape operator, and we shall also obtain some results in the direction that our examples are the only possible ones provided that certain assumptions on the principal curvatures hold. Next, we focus on the study of isoparametric hypersurfaces whose shape operator is non-diagonalizable and also in this context we shall prove the existence of some new examples of proper $r$-harmonic hypersurfaces ($r \geq 3$). Finally, we shall obtain the complete classification of proper $r$-harmonic isoparametric pseudo-Riemannian surfaces into a $3$-dimensional Lorentz space form.
\end{abstract}

\subjclass[2000]{Primary: 58E20; Secondary: 53C43, 53B30}

\keywords{$r$-harmonic maps, pseudo-Riemannian space forms, shape operator}

\thanks{
The author V.B. acknowledges the support of the Austrian Science Fund (FWF) through the START-Project Y963-N35 of Michael Eichmair. The authors S.M. and A.R. were supported by Fondazione di Sardegna (project STAGE) and Regione Autonoma della Sardegna (Project KASBA)}

\maketitle

\section{Introduction}\label{Intro}
We recall that, in the \textit{Riemannian case}, \textit{harmonic maps} are the critical points of the {\em energy functional}
\begin{equation}\label{energia}
E(\varphi)=\frac{1}{2}\int_{M}\,\|d\varphi\|^2\,dV \, ,
\end{equation}
where $\varphi:M\to N$ is a smooth map between two Riemannian
manifolds $(M^m,g)$ and $(N^n,h)$. In particular, $\varphi$ is harmonic if and only if it is a solution of the Euler-Lagrange system of equations associated to \eqref{energia}, i.e.,
\begin{equation}\label{harmonicityequation}
  - d^* d \varphi =   {\trace} \, \nabla d \varphi =0 \, .
\end{equation}
The left member of \eqref{harmonicityequation} is a vector field along the map $\varphi$ or, equivalently, a section of the pull-back bundle $\varphi^{-1} TN$: it is called {\em tension field} and denoted $\tau (\varphi)$. In addition, we recall that, if $\varphi$ is an \textit{isometric immersion}, then $\varphi$ is a harmonic map if and only if the immersion $\varphi$ defines a minimal submanifold of $N$ (see \cite{EL83, EL1} for background).

Next, in order to define the notion of an $r$-harmonic map, we consider the following family of functionals which represent a version of order $r$ of the classical energy \eqref{energia}. If $r=2s$, $s \geq 1$:
\begin{eqnarray}\label{2s-energia}
E_{2s}(\varphi)&=& \frac{1}{2} \int_M \, \langle \, \underbrace{(d^* d) \ldots (d^* d)}_{s\, {\rm times}}\varphi, \,\underbrace{(d^* d) \ldots (d^* d)}_{s\, {\rm times}}\varphi \, \rangle_{_N}\, \,dV \nonumber\\ 
&=& \frac{1}{2} \int_M \, \langle \,\overline{\Delta}^{s-1}\tau(\varphi), \,\overline{\Delta}^{s-1}\tau(\varphi)\,\rangle_{_N} \, \,dV\,.
\end{eqnarray}
In the case that $r=2s+1$:
\begin{eqnarray}\label{2s+1-energia}
E_{2s+1}(\varphi)&=& \frac{1}{2} \int_M \, \langle\,d\underbrace{(d^* d) \ldots (d^* d)}_{s\, {\rm times}}\varphi, \,d\underbrace{(d^* d) \ldots (d^* d)}_{s\, {\rm times}}\varphi\,\rangle_{_N}\, \,dV\nonumber \\
&=& \frac{1}{2} \int_M \,\sum_{j=1}^m \langle\,\nabla^\varphi_{e_j}\, \overline{\Delta}^{s-1}\tau(\varphi), \,\nabla^\varphi_{e_j}\,\overline{\Delta}^{s-1}\tau(\varphi)\, \rangle_{_N} \, \,dV \,.
\end{eqnarray}

Here, \(\overline{\Delta}=d^* d\) represents the Laplacian on the pull-back bundle $\varphi^{-1} TN$.
Then a map $\varphi:(M^m,g)\to(N^n,h)$ is \textit{$r$-harmonic} if, for all variations $\varphi_t$,
$$
\left .\frac{d}{dt} \, E_{r}(\varphi_t) \, \right |_{t=0}\,=\,0 \,\,.
$$
In the case that $r=2$, the functional \eqref{2s-energia} is called \textit{bienergy} and its critical points are the so-called \textit{biharmonic maps}. A very ample literature on biharmonic maps is available and we refer to \cite{Chen, Jiang, SMCO, Ou} for an introduction to this topic. More generally, the \textit{$r$-energy functionals} $E_r(\varphi)$ defined in \eqref{2s-energia}, \eqref{2s+1-energia} have been intensively studied (see \cite{Volker, Volker2, BMOR1, Maeta1, Maeta3, Maeta2, MOR-space forms, Mont-Ratto4, Na-Ura, Wang, Wang2}, for instance).

We say that an $r$-harmonic map is {\it proper} if it is not harmonic (similarly, an $r$-harmonic submanifold, i.e., an $r$-harmonic isometric immersion, is {\it proper} if it is not minimal). We point out that, as observed by Maeta in his series of papers {\cite{Maeta1, Maeta3, Maeta2}}, in general $r$-harmonic does not imply $r'$-harmonic for $r'>r$ unless the target manifold is flat.

In our recent work \cite{MOR-space forms} we proved some general results for $r$-harmonic hypersurfaces into space forms and deduced that the value of the integer $r$ plays a crucial role to generate geometric phenomena which differ substantially from the classical situation corresponding to the biharmonic and triharmonic cases. For instance, if $\ell \geq 3$, there exists no isoparametric hypersurface of $\s^{m+1}$ of degree $\ell$ which is proper biharmonic or triharmonic. By contrast, when $r \geq 5$, there are several examples of such hypersurfaces which are proper $r$-harmonic (see \cite{MOR-space forms}). From the point of view of the differential geometry of submanifolds, the difficulties which one encounters in studying the equations which define a general $r$-harmonic submanifold are huge. Therefore, a reasonable starting point is to focus on the case that the ambient is a space form $N^{m+1}(c)$  (here and below, $c$ denotes the sectional curvature) and study CMC hypersurfaces with constant squared norm $\|A\|^2$ of the shape operator. In this order of ideas, in the Riemannian case we obtained the following general result:  
\begin{theorem}\label{Th-existence-hypersurfaces-c>0-e-c<0}\cite{MOR-space forms} Let $M^m$ be a non-minimal CMC hypersurface in a Riemannian space form $N^{m+1}(c)$ and assume that $\|A\|^2$ is constant. Then
$M^m$ is proper $r$-harmonic ($r \geq 3$) if and only if
\begin{equation*}\label{r-harmonicity-condition-in-general}
\|A\|^4-m\,c\,\|A\|^2-(r-2)m^2 \,c\, \alpha^2=0 \,,
\end{equation*}
where the constant $\alpha$ denotes the mean curvature of $M^m$.
\end{theorem}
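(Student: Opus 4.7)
The plan is to reduce the $r$-harmonic equation on $\varphi:M^m\to N^{m+1}(c)$ to a scalar polynomial identity in $\|A\|^2$ and $\alpha$, by first computing iterated rough Laplacians of the tension field and then substituting them into the explicit Euler--Lagrange formula for $\tau_r(\varphi)$.

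First, I would write $\tau(\varphi)=m\alpha\eta$ for a chosen unit normal $\eta$ and the constant mean curvature $\alpha\neq 0$. Combining the Weingarten identity $\nabla^{\varphi}_X\eta=-A(X)$, the Gauss formula, and the Codazzi equation (which in a space form yields $\sum_i(\nabla_{e_i}A)(e_i)=m\,\grad\alpha$), a direct calculation gives
$$\overline{\Delta}\eta=\|A\|^2\,\eta+m\,\grad\alpha.$$
Under the CMC hypothesis together with the constancy of $\|A\|^2$, the tangential contribution vanishes identically, and a straightforward induction delivers
$$\overline{\Delta}^{\,j}\tau(\varphi)=m\alpha\|A\|^{2j}\,\eta,\qquad j\geq 0.$$

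Next, I would invoke the known explicit formula for $\tau_r(\varphi)$. Its crucial structural feature is that the $r$-tension field consists of the leading term $\overline{\Delta}^{\,r-1}\tau(\varphi)$ plus curvature corrections that are \emph{linear} in the ambient $R^N$ and built out of the iterated Laplacians $\overline{\Delta}^{\,k}\tau(\varphi)$ and their tangential derivatives $\nabla^{\varphi}_{e_i}\overline{\Delta}^{\,k}\tau(\varphi)=-m\alpha\|A\|^{2k}A(e_i)$. In the ambient space form, the identity $R^N(X,Y)Z=c(\langle Y,Z\rangle X-\langle X,Z\rangle Y)$ together with $\nabla^N R^N=0$ force every correction to collapse to a scalar multiple of $\eta$. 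More concretely, the ``first-type'' corrections use $\sum_i R^N(d\varphi(e_i),\overline{\Delta}^{\,k}\tau)d\varphi(e_i)=-mc\,\overline{\Delta}^{\,k}\tau$ and produce contributions proportional to $m\alpha c\,\|A\|^{2(r-2)}$, while the ``second-type'' (gradient) corrections pair two tangential derivatives $-m\alpha\|A\|^{2j}A(e_i)$ and produce contributions proportional to $m^2\alpha^2 c\,\|A\|^{2(r-3)}$.

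Collecting all contributions and using the iteration identity, the normal part of $\tau_r(\varphi)$ factorises as
$$\tau_r(\varphi)=m\alpha\,\|A\|^{2(r-3)}\bigl[\|A\|^4-m c\|A\|^2-(r-2)m^2 c\alpha^2\bigr]\,\eta,$$
and its tangential part vanishes automatically under the standing hypotheses. Since the immersion is non-minimal we have $\alpha\neq 0$, and the CMC+constant $\|A\|^2$ condition excludes $\|A\|^2=0$; therefore $r$-harmonicity is equivalent to the vanishing of the bracket, which is precisely the stated condition. \emph{The main obstacle} is combinatorial: one has to verify that the Wang--Maeta formula for $\tau_r$ produces exactly the coefficient $r-2$ (with the correct sign) in front of $m^2 c\alpha^2\|A\|^{2(r-3)}$, i.e., that exactly $r-2$ of the second-type pairings survive after all cancellations. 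This is achieved through careful bookkeeping based on the recursion $\overline{\Delta}(f\eta)=(\overline{\Delta}f+f\|A\|^2)\eta-2\nabla^{\varphi}_{\grad f}\eta$ and the observation that $\grad f=0$ whenever $f$ is a polynomial in the constants $\|A\|^2$ and $\alpha$.
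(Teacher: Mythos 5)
Your proposal is correct and follows essentially the same route as the paper's argument (carried out there for the pseudo-Riemannian generalization, Theorem~\ref{Th-existence-hypersurfaces-c>0-e-c<0-pseudo}, and specializing to $\varepsilon=1$ here): one establishes $\overline{\Delta}^{\,p}\tau(\varphi)=m\alpha\|A\|^{2p}\eta$ by the same Weingarten/Codazzi computation, substitutes into the Maeta--Wang formula, collapses the curvature corrections via the space-form identity, and counts exactly $r-2$ surviving pairings. The only cosmetic slip is that your quoted recursion for $\overline{\Delta}(f\eta)$ omits the Codazzi term $mf\,\grad\alpha$, but this vanishes under the CMC hypothesis and does not affect the argument.
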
 
\begin{remark} In the biharmonic case, a similar result is available under a less restrictive hypothesis. Indeed,
\begin{theorem}\label{Th-bihar-hypersurfaces-spheres}
(See \cite{CMO02, CMO03, Jiang}) Let $M^m$ be a non-minimal CMC hypersurface in $N^{m+1}(c)$. Then $M^m$ is proper biharmonic if and only if $\|A\|^2=cm$. In particular, if $c \leq 0$, then no such $M^m$ can exist. 
\end{theorem}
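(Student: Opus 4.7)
The plan is to exploit the splitting of the biharmonicity equation for an isometric immersion into a space form into its tangential and normal components, which is a classical calculation (due to B.-Y.~Chen, cited in the paper). Specifically, writing $H=\alpha\eta$ for the mean curvature vector with $\eta$ a unit normal, so that $\tau(\varphi)=m\alpha\eta$, one computes $\tau_2(\varphi)=-\bar\Delta\tau(\varphi)-m c\,\tau(\varphi)$ (using $R^N(X,Y)Z=c(\langle Y,Z\rangle X-\langle X,Z\rangle Y)$) and then decomposes into tangent and normal parts. The result is the well-known system
\begin{equation*}
\Delta\alpha+\alpha\bigl(\|A\|^2-mc\bigr)=0, \qquad 2A(\grad\alpha)+m\alpha\,\grad\alpha=0,
\end{equation*}
for the normal and tangential components respectively. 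I would first recall or rederive this system carefully (paying attention to sign conventions for $\Delta$ and $A$).

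Next I would impose the CMC hypothesis, i.e. $\alpha$ is a real constant. The tangential equation is then automatic since $\grad\alpha=0$. The normal equation reduces to the algebraic relation $\alpha(\|A\|^2-mc)=0$. Since the statement asks for \emph{proper} biharmonic, $M^m$ is assumed not minimal, so $\alpha\neq0$, and we conclude $\|A\|^2=mc$. Conversely, if $\alpha$ is a nonzero constant and $\|A\|^2=mc$, then both components of the biharmonic equation vanish, so $\varphi$ is biharmonic and, being non-minimal, proper biharmonic. This establishes the equivalence.

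Finally, for the last assertion I would argue by contradiction: if such a proper biharmonic $M^m$ existed with $c\leq0$, then $\|A\|^2=mc\leq0$. But $\|A\|^2\geq0$ pointwise on a Riemannian hypersurface, forcing $c=0$ and $A\equiv0$; this means $M^m$ is totally geodesic, hence minimal, contradicting properness.

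The entire argument is essentially an algebraic reduction once the Chen-type decomposition of the biharmonic equation is in hand, so the only real obstacle is setting up that decomposition with consistent sign conventions — in particular, making sure the curvature term $mc\,\tau(\varphi)$ appears with the correct sign so that the normal component is $\alpha(\|A\|^2-mc)=0$ rather than $\alpha(\|A\|^2+mc)=0$. Once this is done, the CMC hypothesis reduces everything to a single scalar identity and the conclusion for $c\leq0$ is immediate from the non-negativity of $\|A\|^2$.
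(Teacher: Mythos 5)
Your argument is correct and is essentially the same as the one underlying the paper: the statement itself is quoted from \cite{CMO02, CMO03, Jiang} without proof, but your normal/tangential decomposition is precisely the Riemannian specialization ($\varepsilon=1$, $\varepsilon_i=1$, $r=2$) of Lemma~\ref{Lemma-Delta-H} and of the computation in the proof of Theorem~\ref{Th-existence-hypersurfaces-c>0-e-c<0-pseudo}, where the CMC hypothesis makes the tangential equation vacuous and reduces the normal equation to $\alpha\left(\|A\|^2-mc\right)=0$. Your closing observation that $\|A\|^2\geq 0$ on a Riemannian hypersurface forces minimality when $c\leq 0$ is exactly the intended justification of the last sentence of the theorem.
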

\end{remark}

The first goal of this paper is to establish a version of Theorem\link\ref{Th-existence-hypersurfaces-c>0-e-c<0} when the ambient is a pseudo-Riemannian space form. Next, we shall describe several new examples and some geometric applications. In order to state our results, it is convenient to introduce first some basic notions concerning pseudo-Riemannian manifolds and geometry. Therefore, our paper is organised as follows. 

In Section\link\ref{Sec-preliminaries} we shall review some basic aspects of the theory of pseudo-Riemannian space forms and pseudo-Riemannian geometry. Moreover, we shall describe how to generalise the notion of $r$-harmonicity in this context. 

In Section\link\ref{Sec-results} we shall state our main results and their geometric applications. 

Finally, in Section\link\ref{Sec-proofs}, we shall provide all the proofs. 
\vspace{2mm}

For the sake of completeness, we mention in this introduction that another possible, interesting definition of an $r$-order version of the energy functional, which was proposed by Eells-Sampson and Eells-Lemaire (see \cite{EL83, ES}), is
\begin{equation}\label{r-energy-Eells-Lemaire}
 E_{r}^{ES}(\varphi)= \frac{1}{2} \int_M \, \left\|(d+d^*){^r}\varphi \right \|^2 \,dV \,.
  \end{equation}  
As for a detailed discussion and comparison between definitions \eqref{2s-energia}, \eqref{2s+1-energia} and \eqref{r-energy-Eells-Lemaire}, we refer to \cite{BMOR1}. We believe that most of the techniques used in \cite{BMOR1} could also be applied in the pseudo-Riemannian context, but we shall not pursue this option in this paper.\\
%

{\bf Acknowledgements}. The authors would like to thank Professor Miguel Dom\'{i}nguez V\'{a}zquez for very useful correspondence on Lorentzian isoparametric hypersurfaces.

\section{Pseudo-Riemannian geometry, pseudo-Riemannian space forms and $r$-harmonicity}\label{Sec-preliminaries}
A basic reference for pseudo-Riemannian geometry is the classical book of O'Neill (see \cite{Neill}), but for the specific topics treated in this section we also refer to \cite{Abe, Beem, Dong, Liu, Sasahara, Zhang}.

Let $(M^m_t,g)$ be a pseudo-Riemannian manifold of dimension
$m$ with a \textit{nondegenerate} metric of \textit{index} $t$ ($0 \leq t \leq m$). In order to clarify the notion of index $t$, let us first recall that nondegeneracy means that the only vector $X \in T_pM$ satisfying $g_p(X, Y ) = 0$ for all $Y \in T_pM$ 
is $X = 0$, for any $p \in M $.  A local \textit{orthonormal} frame field
of $(M^m_t,g)$ is a set of local vector fields $\{e_i\}_{i=1}^m$ such that $g(e_i, e_j) = \varepsilon_i \delta_{ij}$, with
$\varepsilon_1=\ldots \varepsilon_t=-1$, $\varepsilon_{t+1}=\ldots \varepsilon_m=1$.

Next, let us fix terminology and notations concerning pseudo-Riemannian space forms.
The $m$-dimensional pseudo-Euclidean space with index $t$ is denoted by $\R^m_t=(\R^m,\langle,\rangle)$, where 
\[
\langle x,y \rangle= -\sum_{i=1}^t x_i y_i +\sum_{i=t+1}^m x_i y_i \,.
\]
The $m$-dimensional pseudo-Riemannian sphere, denoted by $\s^m_t (c)$ is defined as follows:
\begin{equation}\label{eq-def-pseudosfere}
\s^m_t (c)= \left \{x \in \R^{m+1}_t \colon \langle x,x \rangle = \frac{1}{c} \right \} \quad \quad (c>0)\,.
\end{equation}
$\s^m_t (c)$, with the induced metric from $\R^{m+1}_t$, is a complete pseudo-Riemannian manifold with index $t$ and constant positive sectional curvature $c$.

The $m$-dimensional pseudo-Riemannian hyperbolic space, denoted by $\H^m_t (c)$ is defined as follows:
\begin{equation}\label{eq-def-pseudohyperbolic}
\H^m_t (c)= \left \{x \in \R^{m+1}_{t+1} \colon \langle x,x \rangle =\frac{1}{c} \right \} \quad \quad (c<0)\,.
\end{equation}
$\H^m_t (c)$, with the induced metric from $\R^{m+1}_{t+1}$, is a complete pseudo-Riemannian manifold with index $t$ and constant negative sectional curvature $c$.

A pseudo-Riemannian space form refers to one of the three spaces $\R^m_t$, $\s^m_t (c),\H^m_t (c)$. We shall write $\s^m_t $ and $\H^m_t $ for $\s^m_t (1)$ and $\H^m_t (-1)$ respectively. Sometimes, to provide a unified treatment, we also use $N^m_t(c)$ to denote a pseudo-Riemannian space form of sectional curvature $c$.
 
The flat ($c=0$) pseudo-Riemannian space $\R^m_t$ is called \textit{Minkowski space}, while $\s^m_t (c)$ and $\H^m_t (c)$ are known as \textit{de Sitter space} and \textit{anti-de Sitter space} respectively. When the index is $t=1$, these spaces are also referred to as \textit{Lorentz space forms}. We also point out that $\s^m_t (c)$ is diffeomorphic to $\R^t \times \s^{m-t}$, while $\H^m_t (c)$ is diffeomorphic to $\s^{t} \times \R^{m-t}$. In particular, $\s^m_{m-1} (c)$ and $\H^m_1 (c)$ are not simply connected. 

Finally, we point out that in this paper we restrict our study to \textit{connected} manifolds. Therefore, even if in some definitions formally one has two connected components (for instance, according to \eqref{eq-def-pseudosfere}, \eqref{eq-def-pseudohyperbolic}, this happens to $\s^m_m (c)$ and $\H^m_0 (c)$), we shall implicitly assume that we just work with one connected component.

Next, we recall for future use that the sectional curvature tensor field of $N^m_t(c)$ is described by the following simple expression:
\begin{equation}\label{tensor-curvature-N(c)}
R^{N(c)}(X,Y)Z=c\, \big(\langle Y,Z \rangle X-\langle X,Z \rangle Y \big) \quad \quad  \forall \,X,Y,Z \in C(TN(c)) \,.
\end{equation}
Pseudo-Riemannian space forms have important applications in the theory of general relativity and $\R^3_1$, $\s^3_1 (c)$ and $\H^3_1 (c)$ are model spaces for Minkowski, de Sitter and  anti-de Sitter space-time respectively.

\vspace{3mm}

Now, in order to make this paper as self-contained as possible, we follow \cite{Dong} and recall here how the basic operators of Riemannian geometry extend to the pseudo-Riemannian context. In a chart of $(M^m_t,g)$, for a local orthonormal frame field $\{e_i\}_{i=1}^m$ we have the following basic identities, which hold for any $X \in {C(TM)}$, $f \in {C^\infty(M)}$ and bilinear form $b \in C( \odot^2 TM)$:
\begin{eqnarray}\label{key-formulas-pseudo}\nonumber
X &=&\sum_{i=1}^m \varepsilon_i g(X,e_i) e_i \,;\\\nonumber
\grad f &=&\sum_{i=1}^m \varepsilon_i df(e_i) e_i  \,;\\
{\rm div} X &=&\sum_{i=1}^m \varepsilon_i g(\nabla_{e_i}X,e_i)  \,;\\\nonumber
\Delta f &=&-\sum_{i=1}^m \varepsilon_i \left [e_i(e_if)-\big (\nabla_{e_i}e_i\big)f\right ]  \,;\\\nonumber
\trace b& =&\sum_{i=1}^m \varepsilon_i b(e_i,e_i)\,.
\end{eqnarray}
In this paper we shall focus on the study of \textit{pseudo-Riemannian} (or, \textit{nondegenerate}) hypersurfaces of a pseudo-Riemannian space form. We can describe such hypersurfaces by means of a smooth  
$\varphi:M^m_{t'} \to N^{m+1}_t$ and the hypothesis that $M^m_{t'}$ is pseudo-Riemannian simply means that the metric induced by $\varphi$ is nondegenerate. Therefore we can assume that, locally, there always exists an orthonormal frame field $\{e_i \}_{i=1}^m$ on $M^m_{t'}$. Moreover, denoting $\eta$ a unit normal vector field, $\{e_i ,\eta\} $ is a local orthonormal frame field on $N^{m+1}_t$ and we have two possibilities: either $\langle \eta, \eta \rangle=\varepsilon=1$ and $t'=t$, or $\varepsilon=-1$ and $t'=t-1$. 

In particular, when the ambient space is Lorentzian and $\varepsilon=-1$, then the hypersurface is Riemannian and the standard terminology is to say that it is \textit{space-like}. Let us denote $B$ the second fundamental form and $A_\eta=A$ the associated shape operator. The classical Gauss and Weingarten formulas hold as in the Riemannian case, i.e.,
\begin{eqnarray}\label{Gauss-Weingarten-eq}
\nabla^N_X Y&=&\nabla^M_X Y +B(X,Y) {\,;}\\\nonumber
\nabla^N_X \eta&=& - A(X)
\end{eqnarray}
for all tangent vector fields to $M^m_{t'}$. We observe that it is easy to deduce from \eqref{Gauss-Weingarten-eq} that 
\begin{equation}\label{eq-legame-A-B}
B(X,Y)=  \varepsilon \left < A(X),Y\right > \eta\,.
\end{equation}
The mean curvature vector field of $M^m_{t'}$, denoted ${\mathbf H}$, is defined by
\[
{\mathbf H}= \frac{1}{m}\,\trace B = f\,\eta 
\]
where, using \eqref{eq-legame-A-B}, we deduce that the mean curvature function $f$ is given by
\begin{equation}\label{mean-curv-fuction-pseudo}
f=  \frac{1}{m} \varepsilon \sum_{i=1}^m \varepsilon_i \langle A(e_i),e_i \rangle \,.
\end{equation}
In this paper, we shall always say that a hypersurface of a pseudo-Riemannian space is \textit{minimal} if $f$ vanishes identically. However, we mention that, when the ambient space is Lorentzian and the hypersurface is space-like, the term \textit{maximal} instead of minimal is also used in the literature, because in this case small regions are local maximizers of the volume functional.
 
For future use, we recall that
\begin{equation}\label{norma-A}
\trace A^2=\,\sum_{i=1}^m  \varepsilon_i \langle A(A(e_i)),e_i  \rangle=\sum_{i=1}^m  \varepsilon_i \langle A(e_i),A(e_i)\rangle\,.
\end{equation}
We point that in \cite{Dong, Liu} the right hand side of \eqref{norma-A} is denoted by $\|A\|^2$ instead of $\trace A^2$.

%
\vspace{3mm}

In the last equality of \eqref{norma-A} we used the fact that the shape operator $A$ is self-adjoint. We point out that, in general, $A$ is \textit{not} diagonalizable. Indeed, since $\langle \,,\rangle$ is not positive definite, it is well-known that complex eigenvalues may appear or, in some cases, $A$ may not even be diagonalizable over $\C$ (see \cite{Hahn}). We shall study these situations in detail in Section\link\ref{Sec-results}.

Next, we introduce an important family of pseudo-Riemannian hypersurfaces of $N=\s^{m+1}_t$ ($m \geq2$, $1 \leq t \leq m$) and give, in Table~\ref{Hypersurfaces-pseudo-sphere}, the expression for $A$ and $\varepsilon$ (see \cite{Abe}). The definition of the hypersurfaces  in Table~\ref{Hypersurfaces-pseudo-sphere} are given in Table~\ref{Hypersurfaces-pseudo-sphere-notations1} and Table~\ref{Hypersurfaces-pseudo-sphere-notations2}.
%
%
\begin{table}[h!]
\begin{tabular}{ |p{3.5cm}|p{6cm}|p{1.5cm}|  }
 \hline
 \multicolumn{3}{|c|}{Pseudo-Riemannian hypersurfaces of $\s^{m+1}_t=N\subset \R^{m+2}_t $} \\
 \hline
 Hypersurface & Shape operator $A$  & $\varepsilon$\\
 \hline\vspace{.5mm}
 
 $\s^m_t(c) $   &\vspace{.5mm} $\pm \sqrt{c-1}\,I$    &\vspace{.5mm} 1\vspace{.5mm}\\
 $\s^m_{t-1}(c)$ &  $\pm \sqrt {1-c} \,I$  & $-1$   \vspace{1mm}\\
$\R^m_{t-1}$ &$\pm \,I $ & $-1$\vspace{1mm}\\
$\H^m_{t-1}(c)$    &$\pm \sqrt {1-c}\, I$ & $-1$\vspace{1mm}\\
 $\s^{k}_{\ell}(c) \times \s^{m-k}_{t-\ell}\left (\frac{c}{c-1}\right )$ &   $\pm(\sqrt{c-1}\, I_k \oplus - \sqrt {1/(c-1)}\, I_{m-k})$  & $1$\vspace{1mm}\\
 $\s^{k}_{\ell}(c) \times \H^{m-k}_{t-\ell-1}\left (\frac{c}{c-1} \right )$ & $\pm(\sqrt{1-c}\, I_k \oplus  \sqrt {1/(1-c)}\, I_{m-k}) $  & $-1$   \\
 \hline
\end{tabular}
\vspace{1mm}

\caption{}
\label{Hypersurfaces-pseudo-sphere}
\end{table}

\begin{table}[h!]
\begin{tabular}{ |p{1.cm}p{.2cm}p{8.1cm}|p{1.6cm}|  }
 \hline
$\s^m_t(c)$ & =&$\big\{ x=(x_1,\ldots,x_{m+2}) \in N \colon x_{m+2}=\sqrt{1-(1/c)}\big\}$& $1 \leq c$\vspace{1mm}
\\
$\s^m_{t-1}(c)$&=&$\big \{ x\in N \colon x_1=\sqrt{(1/c)-1} \big \}$ & $0<c\leq 1$\vspace{1mm}\\
$\R^m_{t-1}$&=&$\big \{ x\in N \colon x_{1}=x_{m+2}+a\big \}$ & $a>0$\vspace{1mm}\\
$\H^m_{t-1}(c)$&=&$\big\{ x\in N \colon x_{m+2}=\sqrt{1-(1/c)}\big \}$ & $ c<0$\\
 \hline
\end{tabular}
\vspace{1mm}

\caption{}
\label{Hypersurfaces-pseudo-sphere-notations1}
\end{table}

{\small
\begin{table}[h!]
\begin{tabular}{ |p{3.1cm}p{.1cm}p{5.6cm}|p{2.4cm}|  }
 \hline
$\displaystyle{\s^{k}_{\ell}(c) \times \s^{m-k}_{t-\ell} (\frac{c}{c-1} )}$ &=&$\displaystyle{ \Big\{ x\in N \colon -\sum_{i=1}^{\ell}x_i^2+\sum_{i=t +1}^{t+k-\ell+1}x_i^2= \frac{1}{c}},$ &$c>1$ \\
&&$ \displaystyle{-\sum_{i=\ell+1}^{t}x_i^2+\sum_{i=t+k-\ell +2}^{m+2}x_i^2= \frac{c-1}{c} \Big\}}$ &$1\leq k \leq m-1$\\
&&&$0\leq \ell \leq k$\\
&&&$0\leq \ell \leq t$\\
&&&$t-\ell \leq m-k$\\
 \hline
 $\displaystyle{\s^{k}_{\ell}(c) \times \H^{m-k}_{t-\ell-1} (\frac{c}{c-1}  )}$ &=&$\displaystyle{ \Big\{ x\in N \colon -\sum_{i=1}^{\ell}x_i^2+\sum_{i=t +1}^{t+k-\ell+1}x_i^2= \frac{1}{c}},$ &$1>c>0$ \\
&&$ \displaystyle{-\sum_{i=\ell+1}^{t}x_i^2+\sum_{i=t+k-\ell +2}^{m+2}x_i^2= \frac{c-1}{c} \Big\}}$ &$1\leq k \leq m-1$\\
&&&$0\leq \ell \leq k$\\
&&&$0\leq \ell \leq t-1$\\
&&&$t-\ell-1 \leq m-k$\\
 \hline
\end{tabular}
\vspace{1mm}
\caption{}
\label{Hypersurfaces-pseudo-sphere-notations2}
\end{table}
}

The importance of the families of hypersurfaces described in  Table~\ref{Hypersurfaces-pseudo-sphere} lays in the fact {that} essentially any pseudo-Riemannian hypersurface of $\s^{m+1}_t$ with diagonalizable shape operator having at most two distinct constant principal curvatures is locally congruent to one of these (see Theorem 5.1 of \cite{Abe}, and also Theorem\link\ref{Th-nostro-rigidity-2-curv}, for more details).

\begin{remark}\label{Rem-basta-c>0} 
A similar family of hypersurfaces of pseudo-hyperbolic spaces is available, but we omit its description because all the properties of $r$-harmonic submanifolds in $\H^{m+1}_{t^*}$ can be deduced from those of $r$-harmonic submanifolds in $\s^{m+1}_{t}$, with $t=m+1-t^*$. This is a consequence of the fact that, up to a multiplicative constant factor -1 in the metric (that determines a {non-isometric} transformation), irrelevant for the $r$-harmonic equation, we can identify $\s^{m+1}_t$ with $\H^{m+1}_{m+1-t}$. For this reason, without loss of generality, in this paper all the existence and classification results for $r$-harmonic submanifolds that do not depend on a given signature of the metric will be stated only for the cases that the curvature of the ambient space is either $c=1$ or $c=0$.

%
\end{remark}
%
%
%

Next, to prepare the ground for the study of $r$-harmonicity, we consider a general smooth map $\varphi:(M^m_{t'},g) \to (N^n_t,h)$ between two pseudo-Riemannian manifolds. Let us denote $\nabla^{\varphi}$ the induced connection on the pull-back bundle $\varphi ^{-1}TN$. In the pseudo-Riemannian context, the operator corresponding to the classical Riemannian \textit{rough Laplacian} on sections of $\varphi^{-1}  TN$, which will still be denoted $\overline{\Delta}$, becomes
\begin{equation} \label{roughlaplacian-pseudo}
    \overline{\Delta}=d^* d =-\sum_{i=1}^m\varepsilon_i\left(\nabla^{\varphi}_{e_i}
    \nabla^{\varphi}_{e_i}-\nabla^{\varphi}_
    {\nabla^M_{e_i}e_i}\right)\,,
\end{equation}
where again $\{e_i\}_{i=1}^m$ is a local orthonormal frame field tangent to $M^m_{t'}$. 

We are now in the right position to summarise the key points which enable us to describe the generalisation of the notions of harmonicity and, more generally, $r$-harmonicity to the pseudo-Riemannian context.  

A smooth map $\varphi:(M^m_{t'},g) \to (N^n_t,h)$ between two pseudo-Riemannian manifolds is \textit{harmonic} if its tension field vanishes identically, i.e.,
\begin{equation}\label{tension-field-pseudo}
\tau(\varphi)=\trace \nabla d \varphi= \sum_{i=1}^m \varepsilon_i \left [ \nabla^{\varphi}_{e_i}d\varphi(e_i)-d \varphi \left (\nabla^M_{e_i} e_i \right )\right ] =0\,,
\end{equation}
where $\{e_i \}_{i=1}^m$ is a local orthonormal frame field on $M^m_{t'}$ as above. As for papers and examples in this context, we cite \cite{Konderak, Ratto}.

Similarly, taking into account \eqref{roughlaplacian-pseudo}, we can define the $r$-energy for a map between two pseudo-Riemannian manifolds precisely as in \eqref{2s-energia}, \eqref{2s+1-energia}. In the Riemannian case, the explicit expression for the $r$-tension field associated to $E_r(\varphi)$ was obtained by Maeta and Wang (see \cite{Maeta1, Wang}). Essentially, in the pseudo-Riemannian context, the only relevant difference appears when one has to take a trace (for instance, see the last equation in \eqref{key-formulas-pseudo}). More specifically, we have the following expression for the $r$-tension field, where $\overline{\Delta}$ and $\tau$ are given in \eqref{roughlaplacian-pseudo} and \eqref{tension-field-pseudo} respectively:
\begin{eqnarray}\label{2s-tension}
\tau_{2s}(\varphi)&=&\overline{\Delta}^{2s-1}\tau(\varphi)-\varepsilon_i R^N \left(\overline{\Delta}^{2s-2} \tau(\varphi), d \varphi (e_i)\right ) d \varphi (e_i) \nonumber\\ 
&&  -\, \sum_{\ell=1}^{s-1}\, \left \{\varepsilon_i R^N \left( \nabla^\varphi_{e_i}\,\overline{\Delta}^{s+\ell-2} \tau(\varphi), \overline{\Delta}^{s-\ell-1} \tau(\varphi)\right ) d \varphi (e_i)  \right .\\ \nonumber
&& \qquad \qquad  -\, \left . \varepsilon_i R^N \left( \overline{\Delta}^{s+\ell-2} \tau(\varphi),\nabla^\varphi_{e_i}\, \overline{\Delta}^{s-\ell-1} \tau(\varphi)\right ) d \varphi (e_i)  \right \} \,\, ,
\end{eqnarray}
where $\overline{\Delta}^{-1}=0$ and $\{e_i\}_{i=1}^m$ is a local orthonormal frame field tangent to $M^m_{t'}$ (the sum over $i$ is not written but understood). Similarly,
\begin{eqnarray}\label{2s+1-tension}
\tau_{2s+1}(\varphi)&=&\overline{\Delta}^{2s}\tau(\varphi)-\varepsilon_i R^N \left(\overline{\Delta}^{2s-1} \tau(\varphi), d \varphi (e_i)\right ) d \varphi (e_i)\nonumber \\ 
&&  -\, \sum_{\ell=1}^{s-1}\, \left \{\varepsilon_i R^N \left( \nabla^\varphi_{e_i}\,\overline{\Delta}^{s+\ell-1} \tau(\varphi), \overline{\Delta}^{s-\ell-1} \tau(\varphi)\right ) d \varphi (e_i)  \right .\\ \nonumber
&& \qquad \qquad  -\, \left . \varepsilon_i R^N \left( \overline{\Delta}^{s+\ell-1} \tau(\varphi),\nabla^\varphi_{e_i}\, \overline{\Delta}^{s-\ell-1} \tau(\varphi)\right ) d \varphi (e_i)  \right \} \\ \nonumber
&& \,-\,\varepsilon_i R^N \Big( \nabla^\varphi_{e_i}\,\overline{\Delta}^{s-1} \tau(\varphi), \overline{\Delta}^{s-1} \tau(\varphi)\Big ) d \varphi (e_i)\,\,. 
\end{eqnarray}
From the analytic point of view, one of the major differences with respect to the Riemannian case is the fact the PDE's system $\tau_r(\varphi)=0$ is \textit{not elliptic} when $1 \leq t' \leq m-1$.
\section{Statement of the results}\label{Sec-results}
In the Riemannian case, when the ambient space form has nonpositive sectional curvature there are several results which assert that, under suitable conditions, an $r$-harmonic submanifold is minimal (see \cite{Chen}, \cite{Maeta1}, \cite{Maeta4} and \cite{Na-Ura}, for instance). Things drastically change when the ambient is the Euclidean sphere $\s^{m+1}$. Indeed, in this case several examples of proper $r$-harmonic hypersurfaces have been constructed and studied (see \cite{BMOR1, Maeta2, MOR-space forms, Mont-Ratto4} and references therein). 

In the pseudo-Riemannian setting there are in the literature several interesting results, but most of them are limited to the biharmonic case. One of the instances which has attracted more attention is the study of space-like biharmonic hypersurfaces in a Lorentzian space form. In this case, it seems that the sign of the curvature of the ambient produces phenomena which are, in some sense, dual with respect to the Riemannian case. In other words, positive curvature increases the rigidity of space-like biharmonic hypersurfaces. For example, the following interesting result was proved by Ouyang:
\begin{theorem}\label{Th-Ouyang} \cite{Ouyang} Let $M^m$ be a CMC space-like biharmonic hypersurface in either $\R^m_1$ or $\s^{m+1}_1$. Then $M^m$ is minimal.
\end{theorem}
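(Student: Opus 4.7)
The plan is to compute the bitension field of a CMC space-like hypersurface in a Lorentz space form and then exploit the fact that, for a Riemannian hypersurface, $\trace A^2$ is a nonnegative quantity.

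First, I would set up notation. Since $M^m$ is space-like, the induced metric is Riemannian, so we can choose a local orthonormal frame $\{e_i\}_{i=1}^m$ tangent to $M^m$ with $\varepsilon_i=+1$ for every $i$, while the unit normal $\eta$ is timelike, $\varepsilon=\langle\eta,\eta\rangle=-1$. In particular, $A$ is self-adjoint with respect to a positive-definite inner product, hence diagonalizable over $\R$ with real eigenvalues $\lambda_1,\ldots,\lambda_m$. It follows from \eqref{norma-A} that
\[
\trace A^2 \,=\, \sum_{i=1}^m \lambda_i^2 \,\geq\, 0\,,
\]
with equality if and only if $A\equiv 0$.

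Next, I would compute $\tau_2(\varphi)$ explicitly. Specialising \eqref{2s-tension} to $s=1$ and using \eqref{tensor-curvature-N(c)} together with $\tau(\varphi)=mf\eta$ (normal) and $d\varphi(e_i)$ (tangent, unit), the curvature term reduces to
\[
\sum_{i=1}^m \varepsilon_i R^N\!\left(\tau(\varphi),d\varphi(e_i)\right)d\varphi(e_i) \,=\, mc\,\tau(\varphi) \,=\, m^2 c f\,\eta\,.
\]
For the rough Laplacian, a direct computation using Gauss--Weingarten \eqref{Gauss-Weingarten-eq}, \eqref{eq-legame-A-B}, the Codazzi equation (which in a space form yields $\sum_i \varepsilon_i(\nabla_{e_i}A)(e_i)=m\,\grad f$, and so this vanishes since $f$ is constant), and the CMC assumption gives
\[
\overline{\Delta}\tau(\varphi) \,=\, m\,\varepsilon f\,(\trace A^2)\,\eta \,=\, -mf\,(\trace A^2)\,\eta\,.
\]
Collecting these yields
\[
\tau_2(\varphi) \,=\, -m f\,\bigl(\trace A^2 + mc\bigr)\eta\,.
\]

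Finally, $\tau_2(\varphi)=0$ forces either $f=0$ (in which case $M^m$ is minimal and we are done), or $\trace A^2 = -mc$. When $c=0$ (ambient $\R^{m+1}_1$), the latter gives $\trace A^2=0$, which by the first step forces $A\equiv 0$; then $f=0$ from \eqref{mean-curv-fuction-pseudo} and $M^m$ is totally geodesic, in particular minimal. When $c=1$ (ambient $\s^{m+1}_1$), the latter reads $\trace A^2=-m<0$, contradicting $\trace A^2\geq 0$; hence $f=0$ and $M^m$ is minimal. The only genuinely delicate point is the sign bookkeeping in the computation of $\overline{\Delta}\tau(\varphi)$: the factor $\varepsilon=-1$ coming from the timelike normal flips the sign of the shape-operator contribution relative to the Riemannian case, and this is precisely what makes the Riemannian-sphere biharmonic examples ($\trace A^2=mc$) disappear in the Lorentzian space-like setting.
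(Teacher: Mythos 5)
Your argument is correct. The paper itself does not prove this statement --- it is quoted from Ouyang's paper as background --- but your derivation is exactly the specialization of the machinery the paper develops later: Lemma\link\ref{Lemma-Delta-H} with $f$ constant gives $\overline{\Delta}\tau(\varphi)=mf\varepsilon\trace A^2\,\eta$, the curvature term in \eqref{2s-tension} with $s=1$ gives $mc\,\tau(\varphi)$, and the resulting condition $\varepsilon\trace A^2-mc=0$ is precisely \eqref{2-harmonicity-condition-in-general-pseudo}. Your two key observations are the right ones: (i) for a space-like hypersurface $\varepsilon=-1$ and $A$ is self-adjoint for a positive-definite metric, so $\trace A^2=\|A\|^2\geq0$ with equality only for $A\equiv0$, which kills both the $c=0$ and $c=1$ cases; and (ii) since the biharmonic equation for a CMC hypersurface is a pointwise identity, no constancy of $\trace A^2$ is needed --- this is exactly why the paper's remark after Theorem\link\ref{Th-existence-hypersurfaces-c>0-e-c<0} notes that the $r=2$ case holds ``under a less restrictive hypothesis'' than the $r\geq3$ case. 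Only trivial quibbles: the trace of $\nabla A$ is $m\varepsilon\grad f$ rather than $m\grad f$ (irrelevant here since $f$ is constant), and the ambient in the statement should be read as $\R^{m+1}_1$.
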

In this order of ideas, we also have:
\begin{theorem}\label{Th-Zhang} \cite{Zhang} Let $M$ be a complete, space-like biharmonic surface in $\R^3_1$ or $\s^{3}_1$. Then $M$ must be totally geodesic, i.e., $\R^2$ or $\s^2$.
\end{theorem}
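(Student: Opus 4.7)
The plan is to prove Theorem \ref{Th-Zhang} in three stages: (I) show that the mean curvature function $f$ is constant on $M$; (II) invoke Theorem \ref{Th-Ouyang} to upgrade this to $f\equiv 0$, i.e., $M$ is maximal; (III) apply a classical Calabi-Bernstein-type rigidity result to conclude that $M$ is totally geodesic.

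For stage (I), I would compute the bitension field $\tau_2(\varphi)$ of the isometric immersion $\varphi\colon M^2\to N^3_1(c)$, $c\in\{0,1\}$, from \eqref{2s-tension} with $s=1$, and split $\tau_2(\varphi)=0$ into its components tangent and normal to the time-like unit normal $\eta$ (so $\varepsilon=-1$). Using \eqref{Gauss-Weingarten-eq}, \eqref{tensor-curvature-N(c)}, and the Codazzi identity $\sum_i\varepsilon_i(\nabla_{e_i}A)e_i=\grad\trace A$ available in a space form, a direct calculation yields
\begin{equation*}
\Delta f - f\,\trace A^2 - 2c\,f=0 \quad\text{and}\quad A(\grad f)=f\,\grad f.
\end{equation*}
Because $M$ is space-like its induced metric is Riemannian, so $A$ is self-adjoint with two real principal curvatures $k_1,k_2$ satisfying $k_1+k_2=\trace A=-2f$ by \eqref{mean-curv-fuction-pseudo}. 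On the open set $U=\{p\in M:(\grad f)_p\neq 0\}$ the tangential equation then identifies $f$ as an eigenvalue of $A$ with eigenvector $\grad f$, forcing $k_1=f$ and $k_2=-3f$; hence $\trace A^2=10f^2$ and $\det A=-3f^2$, so the Gauss equation gives $K_M=c-\det A=c+3f^2\geq c\geq 0$ on $U$, and the normal equation reduces there to $\Delta f=10f^3+2cf$.

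The key analytic step is to rule out $U\neq\emptyset$ using the completeness of $(M,g)$. In the sphere case ($c=1$), the bound $K_M\geq 1$ on $U$ propagates globally via the trace constraint $\trace A=-2f$ and forces, by Myers' theorem, that $(M,g)$ is compact; the normal equation together with the rigid structure on $U$ then excludes any nontrivial extremum of $f$, giving $f\equiv 0$. In the flat case ($c=0$), a Bochner/Omori-Yau maximum principle argument on the complete Riemannian surface $(M,g)$ (whose Ricci tensor is bounded below because $K_M\geq 0$ on $U$) applied to $f$ and to suitable auxiliary functions built from $f$ and $|\grad f|^2$ yields the same conclusion. Once $f\equiv 0$, stage (III) is supplied by the classical Calabi-Bernstein theorem for $\R^3_1$ (every complete maximal space-like surface is a space-like affine plane) and its analogue due to Ishihara / Akutagawa-Ramanathan for $\s^3_1$ (every complete maximal space-like surface is a totally geodesic $\s^2$).

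The main obstacle is stage (I): while the tangential equation algebraically fixes the shape operator on $U$ and the Gauss equation immediately produces the curvature bound $K_M\geq c\geq 0$ there, one must carefully combine these with the normal equation and the completeness hypothesis to conclude $f\equiv 0$ (the direct maximum principle applied to $f$ itself does not give a contradiction because $\Delta f$ and $f(\trace A^2+2c)$ have the same sign, so a Bochner-type identity or Omori-Yau applied to $|\grad f|^2$ is needed). Theorem \ref{Th-Ouyang} is strictly redundant once $f\equiv 0$ is established directly, but it offers a clean shortcut if one can first prove only that $f$ is constant; in either case, the Calabi-Bernstein-type rigidity theorems close the argument.
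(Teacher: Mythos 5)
First, a point of reference: the paper does not prove this statement. Theorem \ref{Th-Zhang} is quoted as a known result from \cite{Zhang} purely as background, so there is no internal proof to compare your attempt against; it has to be judged on its own merits. Your computational setup is correct: for a space-like surface ($\varepsilon=-1$, $m=2$) the splitting of $\tau_2(\varphi)=0$ does yield $\Delta f - f\trace A^2-2cf=0$ and $A(\grad f)=f\,\grad f$, and on $U=\{\grad f\neq 0\}$ the principal curvatures are $f$ and $-3f$, whence $\trace A^2=10f^2$ and $K_M=c+3f^2$. Stages (II) and (III) are also sound; in fact, once $f$ is known to be constant the normal equation alone forces $f=0$ for $c\in\{0,1\}$ (since $\trace A^2\geq 0$ when the induced metric is Riemannian), so Theorem \ref{Th-Ouyang} is dispensable, and the Calabi--Bernstein-type rigidity results you invoke for $\R^3_1$ and $\s^3_1$ do exist and would close the argument.

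The genuine gap is exactly where you locate it yourself: the passage from the algebraic structure on $U$ to $f\equiv 0$. The equation $\Delta f=10f^3+2cf$ has the wrong sign for a direct maximum principle or Omori--Yau argument applied to $f$ (as you concede), and the sentence promising a Bochner or Omori--Yau argument ``applied to suitable auxiliary functions built from $f$ and $|\grad f|^2$'' is a placeholder rather than a proof: no auxiliary function is exhibited, no Bochner identity is computed, and this step is precisely the entire analytic content of Zhang's theorem. Two subsidiary points are also left hanging. In the spherical case, Myers' theorem needs $K_M\geq 1$ on all of $M$, whereas you establish it only on $U$; the bound does extend (off $\overline U$ the normal equation forces $f=0$, hence $\det A\leq 0$ and $K_M\geq c$), but this must be said. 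And even granting compactness, the maximum of $f$ is attained at a critical point, hence \emph{outside} $U$, so ``the rigid structure on $U$'' cannot be applied at that point; one needs instead an integral identity or a careful analysis at the boundary of $U$. As written, the proposal reduces the theorem to its hardest step and then asserts that step without argument.
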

This trend is confirmed in the $r$-harmonic case ($r \geq 3$), as we shall show in Corollary\link\ref{Cor-no-space-like}.

By contrast, when the index of the hypersurface is positive, some results in the biharmonic case are available but their interpretation is less evident. For the purpose of comparison with the results of this paper, we report here, using our notations, the following interesting result of Liu and Du (see Theorem\link1.2 of \cite{Liu}, where the cases $c\neq 0,1$ are also dealt with explicitly):
\begin{theorem}\label{Th-Liu-Dong} \cite{Liu} Let $M^m_{t'}$ be a pseudo-Riemannian proper biharmonic hypersurface in $N^{m+1}_t(c)$. If $M^m_{t'}$ has diagonalizable shape operator with at most two distinct principal curvatures, then $c\neq 0$. Furthermore, when $c =1$, then $t'=t$ and $M^m_{t'}$ is congruent to either $\s^m_t(2)$ or  $\s^{m_1}_{t_1}(2) \times \s^{m-m_1}_{t-t_1}( 2 )$ with $m_1 \neq m-m_1$. 
%
%
%
\end{theorem}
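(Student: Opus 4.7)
The plan is to decompose the biharmonic equation $\tau_2(\varphi)=0$ along the directions tangent and normal to $M^m_{t'}$. Starting from \eqref{2s-tension} with $s=1$, substituting $\tau(\varphi)=mf\eta$ for an isometric immersion, and using the Gauss--Weingarten formulas \eqref{Gauss-Weingarten-eq}--\eqref{eq-legame-A-B} together with the explicit ambient curvature \eqref{tensor-curvature-N(c)}, I would arrive at the system
\begin{equation*}
2\,A(\grad f)+m\,f\,\grad f=0,\qquad \Delta f+\varepsilon\,f\,\bigl(\trace A^2-mc\bigr)=0,
\end{equation*}
the $\varepsilon$ in the normal equation coming from $\langle\eta,\eta\rangle=\varepsilon$ and the sign conventions in \eqref{key-formulas-pseudo}.

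Next I would establish that $M^m_{t'}$ is CMC. If $A$ has a single principal curvature this is automatic; if $A$ has two distinct eigenvalues $\lambda_1,\lambda_2$ of multiplicities $m_1$ and $m-m_1$, the tangential equation forces $\grad f$ to lie in an eigenspace of $A$ with eigenvalue $-mf/2$, so $-mf/2\in\{\lambda_1,\lambda_2\}$ wherever $\grad f\ne 0$. A pseudo-Riemannian adaptation of Cartan's classical analysis of hypersurfaces with two principal distributions (tracking the signs $\varepsilon_i$ in frames adapted to the eigenspaces and using the Codazzi identity) shows that each principal distribution is integrable with umbilical leaves and that its eigenvalue is constant along the other distribution. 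Combining this rigidity with the constraint on $\grad f$ yields $\grad f\equiv 0$, hence $f$ is constant.

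With $f$ constant the normal equation collapses to $\trace A^2=\varepsilon m c$. If $c=0$ this gives $\sum_i\lambda_i^2=0$ by \eqref{norma-A} in the diagonalizable case, so $A\equiv 0$ and $M^m_{t'}$ is totally geodesic, contradicting properness; therefore $c\ne 0$. For $c=1$, the possibility $\varepsilon=-1$ would require $\sum_i\lambda_i^2=-m<0$, which is absurd; so $\varepsilon=1$ and $t'=t$. Finally, the algebraic system
\begin{equation*}
m_1\lambda_1+(m-m_1)\lambda_2=mf,\qquad m_1\lambda_1^2+(m-m_1)\lambda_2^2=m,
\end{equation*}
combined with the rigidity result of Abe (Theorem 5.1 of \cite{Abe}; see also Theorem \ref{Th-nostro-rigidity-2-curv}), identifies $M^m_{t'}$ with one of the models of Table \ref{Hypersurfaces-pseudo-sphere} having $\varepsilon=1$: either $\lambda_1=\lambda_2=\pm 1$ (yielding $\s^m_t(2)$) or $(\lambda_1,\lambda_2)=\pm(1,-1)$ (yielding $\s^{m_1}_{t_1}(2)\times\s^{m-m_1}_{t-t_1}(2)$); the inequality $m_1\ne m-m_1$ corresponds exactly to $f\ne 0$, i.e.\ to properness.

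The main obstacle is the CMC step: in the pseudo-Riemannian setting the principal distributions may contain null directions or fail the orthogonal decompositions one exploits in the Riemannian case, so the Codazzi manipulations must be performed in a frame that correctly respects the signature of each eigenspace. A separate verification is required in each admissible combination of the signs $\varepsilon_i$, and only after these checks does the argument collapse to the clean algebraic conclusion of the final paragraph.
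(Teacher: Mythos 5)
First, a structural point: the paper does not prove this statement --- Theorem \ref{Th-Liu-Dong} is quoted (up to notation) from Liu and Du, so there is no internal proof to compare your argument with; what follows is an assessment of your proposal on its own terms. Your architecture --- split $\tau_2(\varphi)=0$ into normal and tangential parts, prove CMC, then invoke Abe's congruence theorem to match against Table \ref{Hypersurfaces-pseudo-sphere} --- is the standard route and is the one Liu and Du follow. Two sign corrections: tracing through Lemma \ref{Lemma-Delta-H} and \eqref{tensor-curvature-N(c)}, the correct split is $\Delta f+f\left(\varepsilon\trace A^2-mc\right)=0$ and $2A(\grad f)+m\varepsilon f\,\grad f=0$. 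Your normal equation carries the $\varepsilon$ on the wrong factor (you then silently land on the correct CMC relation $\trace A^2=\varepsilon mc$, which does not follow from your displayed equation when $\varepsilon=-1$), and your tangential equation drops the $\varepsilon$, so $\grad f$ lies in the eigenspace of eigenvalue $-\varepsilon mf/2$, not $-mf/2$. The endgame is fine: with $f$ constant and $A$ diagonalizable, \eqref{norma-A} gives $\trace A^2=\sum_i\lambda_i^2\geq0$, which kills $c=0$ (totally geodesic, hence not proper) and forces $\varepsilon=1$, i.e.\ $t'=t$, when $c=1$; the algebraic system together with Abe's theorem then yields exactly $\s^m_t(2)$ and $\s^{m_1}_{t_1}(2)\times\s^{m-m_1}_{t-t_1}(2)$, with $m_1\neq m-m_1$ equivalent to properness.

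The genuine gap is the CMC step, which is the entire analytic content of the theorem. You assert that a ``pseudo-Riemannian adaptation of Cartan's classical analysis'' gives integrability and rigidity of the principal distributions and that this ``yields $\grad f\equiv0$'', but no argument is given. Even in the Riemannian case, proving that a proper biharmonic hypersurface with at most two distinct principal curvatures in a space form is CMC requires several pages of Codazzi and connection-form computations on the open set where $\grad f\neq0$ (one shows the eigenvalue $-mf/2$ has multiplicity one, derives ODEs for $f$ along the integral curves of $\grad f$, and reaches a contradiction); none of that transfers for free. In the pseudo-Riemannian setting the difficulty you yourself name --- eigendistributions that may be degenerate or contain null directions --- is precisely where the adaptation can break down, and you give no case-by-case verification over the admissible signatures. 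As written, the proposal establishes the theorem only under the additional hypothesis that $f$ is constant; to close it you must either reproduce the Liu--Du CMC argument in full or cite it explicitly as an input.
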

\begin{remark} When $c=1$, our statement of Theorem\link\ref{Th-Liu-Dong} is equivalent to Theorem\link1.2 of \cite{Liu}, but our formulation makes it easier the comparison with the classical results for biharmonic hypersurfaces in the Riemannian case. 
\end{remark}
The results of \cite{Liu} in the biharmonic case were recently refined in \cite{Dong}. Finally, for the sake of completeness, we also have to cite the interesting paper \cite{Sasahara} by Sasahara, where the author classifies proper biharmonic curves and surfaces in de Sitter 3-space and anti-de Sitter 3-space. In Theorem\link\ref{Th-Sasah-r>2} below, we shall extend this classification for surfaces in $N^3_1(c)$ to the case $r\geq3$.

Our investigation of this type of problems in the pseudo-Riemannian context starts with the following general result, which is a pseudo-Riemannian version of Theorem\link\ref{Th-existence-hypersurfaces-c>0-e-c<0}. To the purpose of a quick comparison between Theorems\link\ref{Th-existence-hypersurfaces-c>0-e-c<0} and \ref{Th-existence-hypersurfaces-c>0-e-c<0-pseudo}, we point out that, if the induced metric on the hypersurface is Riemannian, then $\trace A^2=||A||^2$. 
\begin{theorem}\label{Th-existence-hypersurfaces-c>0-e-c<0-pseudo} Assume that $m \geq2$ and $1 \leq t \leq m$. Let $M^m_{t'}$ be a non-minimal CMC pseudo-Riemannian hypersurface in a pseudo-Riemannian space form $N^{m+1}_t(c)$ and assume that $\trace A^2$ is constant. Then
$M^m_{t'}$ is proper biharmonic  if and only if 
\begin{equation}\label{2-harmonicity-condition-in-general-pseudo}
\varepsilon \trace A^2-m\,c=0 \,.
\end{equation}
If $r \geq 3$, then
$M^m_{t'}$ is proper $r$-harmonic if and only if either
\begin{equation}\label{eq-Tr-A^2=0}
\trace A^2=0
\end{equation}
or
\begin{equation}\label{r-harmonicity-condition-in-general-pseudo}
\varepsilon \left (\trace A^2\right )^2-m\,c\,\left (\trace A^2\right )-(r-2)m^2 \,c\, \alpha^2=0 \,,
\end{equation}
where $\alpha$ denotes the mean curvature of $M^m_{t'}$ and $\varepsilon=\langle \eta,\eta\rangle$.
\end{theorem}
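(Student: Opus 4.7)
The plan is to compute $\tau_r(\varphi)$ explicitly for the isometric immersion $\varphi:M^m_{t'}\to N^{m+1}_t(c)$ under the hypotheses and then impose $\tau_r(\varphi)=0$. As in the Riemannian case \cite{MOR-space forms}, the CMC assumption forces the tension field to be normal, and the extra hypothesis that $\trace A^2$ is constant propagates through all the iterated Laplacians, reducing the whole computation to algebra.

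\textbf{Preparatory step.} The mean curvature function $\alpha$ being constant gives $\tau(\varphi)=m\alpha\,\eta$, and the key identity to establish is
\[
\overline{\Delta}\,\eta=\varepsilon\,(\trace A^{2})\,\eta.
\]
Combining the Weingarten and Gauss formulas \eqref{Gauss-Weingarten-eq} with the rough Laplacian \eqref{roughlaplacian-pseudo} yields
$\overline{\Delta}\eta=\sum_i\varepsilon_i\bigl[(\nabla^{M}_{e_i}A)(e_i)+B(e_i,A(e_i))\bigr]$.
The normal part equals $\varepsilon(\trace A^{2})\eta$ by \eqref{eq-legame-A-B} and \eqref{norma-A}. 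The tangential part $\sum_i\varepsilon_i(\nabla^{M}_{e_i}A)(e_i)$ vanishes: since \eqref{tensor-curvature-N(c)} gives $R^{N(c)}(X,Y)\eta=0$ for tangent $X,Y$, the Codazzi equation reduces to $(\nabla_X A)Y=(\nabla_Y A)X$, and then a short symmetry argument using self-adjointness of $\nabla A$, the CMC hypothesis $Z(\varepsilon m\alpha)=0$, and the antisymmetry $\varepsilon_i\omega^{j}_{i}+\varepsilon_j\omega^{i}_{j}=0$ of the connection forms in a pseudo-orthonormal frame shows that this trace has zero inner product with every tangent vector, hence vanishes by nondegeneracy. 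Setting $k:=\trace A^{2}$ and iterating, induction gives, for every $j\geq 0$,
\[
\overline{\Delta}^{j}\tau(\varphi)=m\alpha(\varepsilon k)^{j}\eta,\qquad \nabla^{\varphi}_{e_i}\overline{\Delta}^{j}\tau(\varphi)=-m\alpha(\varepsilon k)^{j}A(e_i),
\]
so every iterated Laplacian of $\tau(\varphi)$ is normal and its derivatives along $M^{m}_{t'}$ are tangent.

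\textbf{Substitution into the $r$-tension field.} Inserting the above expressions into \eqref{2s-tension}--\eqref{2s+1-tension} and expanding each curvature term via \eqref{tensor-curvature-N(c)}, the orthogonality $\langle\eta,d\varphi(e_i)\rangle=0$ kills one of the two summands at every occurrence of $R^{N(c)}$. The surviving contributions are of two types: pure multiples of $\eta$ (which sum to $m$ times a constant), and multiples of $\langle A(e_i),e_i\rangle\eta$ which, after multiplication by $\varepsilon_i$ and summation over $i$, collapse via \eqref{mean-curv-fuction-pseudo} to $\varepsilon m\alpha$. The two opposite-sign terms inside each $\ell$-bracket of \eqref{2s-tension}--\eqref{2s+1-tension} combine to give a factor of $2$, and summing $\ell$ over its range produces the coefficient $r-2$. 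Collecting everything, one obtains
\[
\tau_{2}(\varphi)=m\alpha\bigl(\varepsilon\,\trace A^{2}-mc\bigr)\eta
\]
and, for $r\geq 3$,
\[
\tau_{r}(\varphi)=m\alpha\,k^{\,r-3}\bigl[\varepsilon\,k^{2}-mc\,k-(r-2)\,c\,m^{2}\alpha^{2}\bigr]\eta.
\]
Imposing $\tau_{r}(\varphi)=0$ and recalling $\alpha\neq 0$ (non-minimality) yields \eqref{2-harmonicity-condition-in-general-pseudo} when $r=2$, and either \eqref{eq-Tr-A^2=0} (from the factor $k^{\,r-3}$ for $r\geq 4$) or \eqref{r-harmonicity-condition-in-general-pseudo} when $r\geq 3$.

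\textbf{Main obstacle.} The genuinely delicate step is the identity $\overline{\Delta}\eta=\varepsilon(\trace A^{2})\eta$: in the pseudo-Riemannian setting the Riemannian argument for the vanishing of the tangential Codazzi trace must be re-examined because the frame is only $\varepsilon$-orthonormal and the metric identities carry extra signs. Everything after this reduces to long but straightforward bookkeeping of the $\varepsilon$ and $\varepsilon_i$ factors arising from \eqref{roughlaplacian-pseudo}--\eqref{tensor-curvature-N(c)} as they propagate through the definitions \eqref{2s-tension}--\eqref{2s+1-tension} of the $r$-tension field.
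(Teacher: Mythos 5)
Your proposal is correct and follows essentially the same route as the paper: establish that the CMC and $\trace A^2$ constant hypotheses give $\overline{\Delta}^{\,p}\tau(\varphi)=m\alpha\,\varepsilon^p(\trace A^2)^p\,\eta$ (the paper packages the vanishing of the tangential Codazzi trace into Lemma\link\ref{Lemma-tecnico-1} and Lemmas\link\ref{Lemma-Delta-H}--\ref{Lemma-Delta2-H}), then substitute into \eqref{2s-tension}--\eqref{2s+1-tension} using \eqref{tensor-curvature-N(c)} to obtain exactly the factorisation $\tau_r(\varphi)=m\alpha(\trace A^2)^{r-3}\bigl[\varepsilon(\trace A^2)^2-mc\,\trace A^2-(r-2)m^2c\,\alpha^2\bigr]\eta$. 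Your remark that the factor $(\trace A^2)^{r-3}$ only yields the solution \eqref{eq-Tr-A^2=0} for $r\geq 4$ is a correct and even slightly more careful reading of the final formula than the paper's own statement.
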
 
\begin{remark} The special case $r=2$ in Theorem\link\ref{Th-existence-hypersurfaces-c>0-e-c<0-pseudo} is part of the result proved in \cite{Liu}, where the condition of biharmonicity for a general hypersurface $M^m_{t'} \hookrightarrow N^{m+1}_t(c)$ was computed. In the case of surfaces \eqref{2-harmonicity-condition-in-general-pseudo} was first obtained in \cite{Sasahara}.
\end{remark}
\begin{remark} It is important to point out that in Theorem\link\ref{Th-existence-hypersurfaces-c>0-e-c<0-pseudo} we do not require that the shape operator $A$ be diagonalizable. By way of example, in Theorem\link\ref{Th-Xiao-c=-1} we shall exhibit a new family of $r$-harmonic surfaces in $\H^3_1$ whose shape operator is \textit{not} diagonalizable.
\end{remark}
A first, immediate consequence of Theorem\link\ref{Th-existence-hypersurfaces-c>0-e-c<0-pseudo} is the following

\begin{corollary}\label{Cor-no-space-like} Assume $r \geq 3$ and $m\geq2$. Let $M^m$ be a space-like, $r$-harmonic CMC hypersurface in a Lorentzian space form $N^{m+1}_1(c)$. If $\trace A^2$ is constant and $c \geq 0$, then $M^m$ is minimal.
\end{corollary}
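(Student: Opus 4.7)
The plan is to invoke Theorem~\ref{Th-existence-hypersurfaces-c>0-e-c<0-pseudo} directly, exploiting the sign constraints that come from the space-like hypothesis.

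First I would unpack what ``space-like'' means here: since $M^m$ is a space-like hypersurface of a Lorentzian ambient ($t=1$), the induced metric is Riemannian, so $t'=0$ and the unit normal $\eta$ is timelike, giving $\varepsilon=\langle\eta,\eta\rangle=-1$. A crucial consequence is that the shape operator $A$ is self-adjoint with respect to a \emph{positive definite} inner product, hence diagonalizable over $\mathbb{R}$ with real eigenvalues $\lambda_1,\dots,\lambda_m$. In particular,
\[
\trace A^2=\sum_{i=1}^m\lambda_i^2\geq 0,
\]
with equality if and only if $A\equiv 0$, in which case $M^m$ is totally geodesic and, a fortiori, minimal.

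Next I would apply Theorem~\ref{Th-existence-hypersurfaces-c>0-e-c<0-pseudo}. Assuming by contradiction that $M^m$ is non-minimal, the theorem provides two alternatives: either $\trace A^2=0$ or the quartic relation \eqref{r-harmonicity-condition-in-general-pseudo}. The first alternative is immediately ruled out by the previous paragraph, since it would force $A\equiv 0$ and hence minimality. So \eqref{r-harmonicity-condition-in-general-pseudo} must hold, which with $\varepsilon=-1$ can be rewritten as
\[
(\trace A^2)^2+m\,c\,(\trace A^2)+(r-2)\,m^2\,c\,\alpha^2=0.
\]

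The final step is to read off a sign contradiction. Under the hypotheses $c\geq 0$, $m\geq 2$ and $r\geq 3$, every term on the left-hand side is non-negative: $(\trace A^2)^2\geq 0$ trivially, $mc\,\trace A^2\geq 0$ because $\trace A^2\geq 0$, and $(r-2)m^2c\,\alpha^2\geq 0$. Hence each term must vanish separately. If $c>0$, the third term gives $\alpha=0$, i.e.\ $M^m$ is minimal; if $c=0$, the first term forces $\trace A^2=0$, and once again the Riemannian nature of the induced metric yields $A\equiv 0$, so $M^m$ is (totally geodesic and) minimal. In either case we reach a contradiction with the assumed non-minimality, completing the proof. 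The argument is essentially a sign-analysis riding on Theorem~\ref{Th-existence-hypersurfaces-c>0-e-c<0-pseudo}; there is no genuine obstacle, the only subtlety being the need to observe that on a space-like hypersurface $\trace A^2$ is automatically non-negative, which is precisely what breaks the symmetry between Riemannian and pseudo-Riemannian ambients.
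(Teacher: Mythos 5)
Your proposal is correct and follows essentially the same route as the paper: invoke Theorem~\ref{Th-existence-hypersurfaces-c>0-e-c<0-pseudo} with $\varepsilon=-1$, note that the Riemannian induced metric forces $\trace A^2=\|A\|^2\geq 0$ (so the alternative $\trace A^2=0$ gives total geodesy), and read off the sign contradiction in \eqref{r-harmonicity-condition-in-general-pseudo} when $c\geq 0$. Your write-up just spells out the sign analysis that the paper leaves implicit.
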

In the same spirit:

\begin{corollary}\label{Cor-no-space-like-bis} Assume that $r \geq 3$, $m \geq2$ and $1 \leq t \leq m$. Let $M^m_{t'}$ be a pseudo-Riemannian $r$-harmonic CMC hypersurface in $N^{m+1}_t(c)$. If $\trace A^2$ is a positive constant and $\varepsilon c < 0$, then $M^m_{t'}$ is minimal.
\end{corollary}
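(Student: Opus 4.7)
The plan is to argue by contradiction using Theorem~\ref{Th-existence-hypersurfaces-c>0-e-c<0-pseudo}. Suppose $M^m_{t'}$ is not minimal. Since it is CMC, the mean curvature $\alpha$ is then a nonzero constant. Because $\trace A^2$ is assumed constant, Theorem~\ref{Th-existence-hypersurfaces-c>0-e-c<0-pseudo} applies and, together with $r \geq 3$, yields that either \eqref{eq-Tr-A^2=0} or \eqref{r-harmonicity-condition-in-general-pseudo} holds. The positivity hypothesis $\trace A^2 > 0$ rules out the first possibility, so we must have
\begin{equation*}
\varepsilon (\trace A^2)^2 - m c\, \trace A^2 - (r-2) m^2 c\, \alpha^2 = 0.
\end{equation*}

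Next, I would multiply this relation through by $\varepsilon$, using $\varepsilon^2 = 1$, and collect terms so as to isolate the square on one side:
\begin{equation*}
(\trace A^2)^2 = \varepsilon c \bigl[\, m \trace A^2 + (r-2) m^2 \alpha^2\, \bigr].
\end{equation*}
The left-hand side is strictly positive, while the bracket on the right is strictly positive because $m \trace A^2 > 0$, $r-2 \geq 1$ and $\alpha^2 > 0$. Under the standing hypothesis $\varepsilon c < 0$, the right-hand side is therefore strictly negative, a contradiction. Hence $\alpha$ must vanish identically, i.e., $M^m_{t'}$ is minimal.

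There is no deep obstacle once Theorem~\ref{Th-existence-hypersurfaces-c>0-e-c<0-pseudo} is in place: the entire content of the corollary is a sign analysis of the polynomial identity \eqref{r-harmonicity-condition-in-general-pseudo}. Two points require only mild care. First, the shape operator is not assumed diagonalizable, so one needs the general version of Theorem~\ref{Th-existence-hypersurfaces-c>0-e-c<0-pseudo} (whose statement is precisely free of any diagonalizability assumption) rather than any refined form involving principal curvatures. Second, one must remember to discard the alternative \eqref{eq-Tr-A^2=0}, which is excluded here by the strict positivity assumption on $\trace A^2$; without this assumption the conclusion of the corollary would fail, as suggested by the examples with non-diagonalizable shape operator alluded to in the paper.
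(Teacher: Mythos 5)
Your proof is correct and follows essentially the same route as the paper: the paper likewise observes that the positivity of $\trace A^2$ excludes \eqref{eq-Tr-A^2=0} and that $\varepsilon c<0$ makes \eqref{r-harmonicity-condition-in-general-pseudo} impossible, forcing minimality. Your version merely spells out the sign analysis (multiplying by $\varepsilon$ and isolating the square) that the paper leaves implicit.
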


Next, we shall use Theorem\link\ref{Th-existence-hypersurfaces-c>0-e-c<0-pseudo} to construct new examples of $r$-harmonic hypersurfaces. Indeed,
\begin{corollary}\label{Cor-r-harmonic-pseudo-hyperspheres} Assume that $r \geq 3$, $m \geq2$ and $1 \leq t \leq m$. Let $\s^m_{t}(c)$ be a small pseudo-hypersphere in $\s^{m+1}_t$. Then $\s^m_{t}(c)$ is proper $r$-harmonic if and only if $c=r$.
\end{corollary}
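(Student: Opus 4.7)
The plan is a direct application of Theorem\link\ref{Th-existence-hypersurfaces-c>0-e-c<0-pseudo}. The small pseudo-hypersphere $\s^m_{t}(c) \hookrightarrow \s^{m+1}_t$ appears in the first row of Table\link\ref{Hypersurfaces-pseudo-sphere}, from which I read off $\varepsilon=1$, $t'=t$, and $A=\pm\sqrt{c-1}\,I$. Since $A$ is a scalar multiple of the identity, the hypersurface is automatically CMC, and \eqref{mean-curv-fuction-pseudo} together with \eqref{norma-A} yield the constant values $\alpha=\pm\sqrt{c-1}$ (so $\alpha^2=c-1$) and $\trace A^2=m(c-1)$. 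In particular $\s^m_t(c)$ is non-minimal if and only if $c\neq 1$, so the hypotheses of Theorem\link\ref{Th-existence-hypersurfaces-c>0-e-c<0-pseudo} are in force under this condition.

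The second step is to substitute these values into the $r$-harmonicity characterisation, noting that the ambient sectional curvature of $\s^{m+1}_t$ equals $1$. Equation \eqref{r-harmonicity-condition-in-general-pseudo} then becomes
\begin{equation*}
m^2(c-1)^2-m^2(c-1)-(r-2)\,m^2(c-1)=0,
\end{equation*}
which factors as $m^2(c-1)(c-r)=0$; since $c\neq 1$, this forces $c=r$. Conversely, for $c=r\geq 3$ the above identity is satisfied, so $\s^m_t(r)$ is $r$-harmonic by the theorem, and it is proper because $c=r\neq 1$ guarantees $A\neq 0$.

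There is no real obstacle in this argument: the statement is essentially an algebraic consequence of Theorem\link\ref{Th-existence-hypersurfaces-c>0-e-c<0-pseudo} once the shape-operator data from Table\link\ref{Hypersurfaces-pseudo-sphere} have been plugged in. The only subtlety worth flagging is the alternative branch \eqref{eq-Tr-A^2=0} of the theorem, namely $\trace A^2=0$: in the totally umbilical setting $A=\pm\sqrt{c-1}\,I$ this forces $m(c-1)=0$, hence $c=1$ and $A=0$, which is the (excluded) totally geodesic case and therefore produces no genuinely new proper example. This completes the equivalence $c=r$.
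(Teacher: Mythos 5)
Your argument is correct and is essentially identical to the paper's proof: both read off $\trace A^2=m(c-1)$ and $\alpha^2=c-1$ from Table\link\ref{Hypersurfaces-pseudo-sphere} via \eqref{mean-curv-fuction-pseudo} and \eqref{norma-A}, then substitute into \eqref{r-harmonicity-condition-in-general-pseudo} with ambient curvature $1$ and $\varepsilon=1$ to obtain $m^2(c-1)(c-r)=0$. Your explicit dismissal of the $\trace A^2=0$ branch is a welcome extra detail that the paper leaves implicit.
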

With the notation of Tables\link\eqref{Hypersurfaces-pseudo-sphere}-\eqref{Hypersurfaces-pseudo-sphere-notations1}-\eqref{Hypersurfaces-pseudo-sphere-notations2}:
\begin{theorem}\label{Th-r-harmonic-pseudo-Clifford} Assume that $r \geq 3$, $m \geq2$ and $1 \leq t \leq m$. Let $M^m_t=\s^{k}_{\ell}(c) \times \s^{m-k}_{t-\ell}\left (\frac{c}{c-1}\right )$ $(c>1)$ be a generalised pseudo-Clifford torus in $\s^{m+1}_t$. Then $M^m_t$ is proper $r$-harmonic if and only if $c \neq \frac{m}{k}$ and
\begin{equation}\label{condizione-pseudo-tori-Clifford}
P_3(c)=k c^3  - k (r+2) c^2  + [m (r-1) + k (r+2)] c  - m r=0\,.
\end{equation}
\end{theorem}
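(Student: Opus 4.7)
The plan is to apply Theorem~\ref{Th-existence-hypersurfaces-c>0-e-c<0-pseudo} with ambient sectional curvature $1$ (since the target is $\s^{m+1}_t$) and $\varepsilon=1$ (as recorded in Table~\ref{Hypersurfaces-pseudo-sphere}). From that same table, the shape operator of $M^m_t$ is, up to a sign, $\sqrt{c-1}\,I_k\oplus -\bigl(1/\sqrt{c-1}\bigr)I_{m-k}$. Thus the principal curvatures are $\lambda_1=\sqrt{c-1}$ with multiplicity $k$ and $\lambda_2=-1/\sqrt{c-1}$ with multiplicity $m-k$; since each frame vector is an eigenvector with $\langle Ae_i,Ae_i\rangle=\lambda_i^2\varepsilon_i$, formula \eqref{norma-A} and \eqref{mean-curv-fuction-pseudo} give
\begin{equation*}
\trace A^2 \;=\; k(c-1)+\frac{m-k}{c-1}\;=\;\frac{kc^2-2kc+m}{c-1},\qquad
\alpha \;=\; \frac{kc-m}{m\sqrt{c-1}},
\end{equation*}
both constant, so the hypothesis ``$\trace A^2$ constant and CMC'' of Theorem~\ref{Th-existence-hypersurfaces-c>0-e-c<0-pseudo} is satisfied.

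Next I observe that condition \eqref{eq-Tr-A^2=0} cannot hold: the discriminant of $kc^2-2kc+m=0$ is $4k(k-m)<0$, since $1\le k\le m-1$. Hence the $r$-harmonicity of $M^m_t$ is equivalent to \eqref{r-harmonicity-condition-in-general-pseudo}, which, after substituting the expressions above and clearing $(c-1)^2$, becomes the polynomial identity
\begin{equation*}
(kc^2-2kc+m)^2 \,-\, m(c-1)(kc^2-2kc+m) \,-\, (r-2)(c-1)(kc-m)^2 \;=\; 0.
\end{equation*}
Expanding yields a quartic $Q(c)$ in $c$. The key remark is that $c=m/k$ must be a root of $Q(c)$, because at $c=m/k$ one has $\alpha=0$, which corresponds to the minimal (hence harmonic, non-proper) case already excluded in the statement. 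Consequently $(kc-m)$ divides $Q(c)$, and a direct polynomial division produces
\begin{equation*}
Q(c) \;=\; (kc-m)\,\bigl(kc^3-k(r+2)c^2+[m(r-1)+k(r+2)]c-mr\bigr)\;=\;(kc-m)\,P_3(c).
\end{equation*}

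Therefore $M^m_t$ is $r$-harmonic if and only if $(kc-m)P_3(c)=0$, and it is \emph{proper} $r$-harmonic if and only if it is non-minimal, i.e.\ $c\ne m/k$, combined with $P_3(c)=0$, which is exactly the statement of the theorem. The only non-routine step is the factorization; everything else is linear algebra of a diagonal shape operator plus the substitution into \eqref{r-harmonicity-condition-in-general-pseudo}.
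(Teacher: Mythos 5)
Your proposal is correct and follows essentially the same route as the paper: compute $\trace A^2$ and $\alpha$ from the shape operator in Table~\ref{Hypersurfaces-pseudo-sphere}, substitute into \eqref{r-harmonicity-condition-in-general-pseudo} with ambient curvature $1$ and $\varepsilon=1$, and factor the resulting quartic as $(kc-m)P_3(c)$, with $kc-m=0$ corresponding to the minimal case. Your explicit discriminant argument ruling out the alternative \eqref{eq-Tr-A^2=0} is a small but welcome addition that the paper leaves implicit.
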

\begin{remark} The third order polynomial $P_3(c)$ in \eqref{condizione-pseudo-tori-Clifford} is equivalent to the one which appears in the Riemannian case. To see this, it is enough to set
\[
k=p \,\,; \quad m-k=q \,\,; \quad c=\frac{1}{x} \,.
\]
Then, up to a constant factor $-1/x^3$, $P_3(c)$ becomes the third order polynomial obtained in Theorem\link1.2 of \cite{Mont-Ratto4}. We point out that, in the special case that $k=m-k$, we have
\[
P_3(c)=(-2 + c) k (c^2 + r - c r) \,.
\]
Therefore, when $k=m-k$, the pseudo-Clifford torus $\s^{k}_{\ell}(c) \times \s^{m-k}_{t-\ell}\left (\frac{c}{c-1}\right )$ is proper $r$-harmonic in $\s^{m+1}_t$ if and only if
\begin{equation}\label{eq-k=m-k}
 c=\frac{r \pm \sqrt{r^2-4r}}{2} \qquad \,\, (r \geq 5) \,.
 \end{equation} 
For a more detailed discussion on the existence and qualitative behaviour of admissible roots of $P_3(c)$ in the general case we refer to \cite{Mont-Ratto4}.
%

\end{remark}
Next, we show that the examples given in Corollary\link\ref{Cor-r-harmonic-pseudo-hyperspheres} and Theorem\link\ref{Th-r-harmonic-pseudo-Clifford} are the only possible ones within a certain class of hypersurfaces. More precisely, we prove the following result:
\begin{theorem}\label{Th-nostro-rigidity-2-curv} Assume that $r \geq 3$, $m \geq2$ and $1 \leq t \leq m$. Let $M^{m}_{t'}$ be a pseudo-Riemannian hypersurface with diagonalizable shape operator in a pseudo-Riemannian space form $N^{m+1}_t (c)$, $c=0$ or $c=1$. Assume that  there exist  at most two distinct principal curvatures and that they are constant on $M^{m}_{t'}$. If $M^{m}_{t'}$ is proper $r$-harmonic, then $M^{m}_{t'}$ is one of the examples given in Corollary\link\ref{Cor-r-harmonic-pseudo-hyperspheres} or Theorem\link\ref{Th-r-harmonic-pseudo-Clifford}.
\end{theorem}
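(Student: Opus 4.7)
The plan is to combine the main criterion of Theorem \ref{Th-existence-hypersurfaces-c>0-e-c<0-pseudo} with Abe's classification of pseudo-Riemannian hypersurfaces whose shape operator is diagonalizable with at most two distinct constant principal curvatures (Theorem~5.1 of \cite{Abe}, which underlies Table~\ref{Hypersurfaces-pseudo-sphere}). Since the principal curvatures are constant and $A$ is diagonalizable, both $\alpha$ and $\trace A^2$ are constant, so Theorem~\ref{Th-existence-hypersurfaces-c>0-e-c<0-pseudo} applies. Moreover, diagonalizability forces the eigenvalues of $A$ to be real, and hence $\trace A^2=\sum_{i=1}^m \lambda_i^2$, where $\lambda_1,\ldots,\lambda_m$ are the principal curvatures.

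In the flat case $c=0$, both alternatives \eqref{eq-Tr-A^2=0} and \eqref{r-harmonicity-condition-in-general-pseudo} of Theorem~\ref{Th-existence-hypersurfaces-c>0-e-c<0-pseudo} force $\trace A^2=0$; hence $\sum_i \lambda_i^2=0$, every $\lambda_i$ vanishes, and $M^m_{t'}$ is totally geodesic, contradicting properness. For $c=1$, Abe's theorem reduces the problem to the six rows of Table~\ref{Hypersurfaces-pseudo-sphere}. For the four umbilical rows, condition \eqref{r-harmonicity-condition-in-general-pseudo} with a single principal curvature $\lambda$ collapses to $\lambda^2\bigl[\varepsilon\lambda^2-(r-1)\bigr]=0$: when $\varepsilon=1$ this gives $\lambda^2=r-1$, and comparison with $\s^m_t(c)$ (where $\lambda^2=c-1$) yields exactly $c=r$, namely Corollary~\ref{Cor-r-harmonic-pseudo-hyperspheres}; when $\varepsilon=-1$ (the second, third and fourth rows) one would need $\lambda^2=1-r<0$, which is impossible for $r\geq 3$.

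For the last two rows of Table~\ref{Hypersurfaces-pseudo-sphere}, I would substitute the eigenvalues listed there directly into \eqref{r-harmonicity-condition-in-general-pseudo}. In the fifth row $\s^{k}_{\ell}(c)\times \s^{m-k}_{t-\ell}(c/(c-1))$ ($\varepsilon=1$, $c>1$), a short calculation gives $\trace A^2=(kc^2-2kc+m)/(c-1)$ and $m^2\alpha^2=(kc-m)^2/(c-1)$; inserting these into \eqref{r-harmonicity-condition-in-general-pseudo}, clearing the denominator by $(c-1)^2$ and factoring out the non-minimality factor $(kc-m)$ (which is nonzero since $c\neq m/k$), one should recover precisely the cubic $P_3(c)=0$ of Theorem~\ref{Th-r-harmonic-pseudo-Clifford}. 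In the sixth row $\s^{k}_{\ell}(c)\times \H^{m-k}_{t-\ell-1}(c/(c-1))$ ($\varepsilon=-1$, $0<c<1$), the same substitution yields $\trace A^2=[k(c-1)^2+(m-k)]/(1-c)>0$, so the left-hand side of \eqref{r-harmonicity-condition-in-general-pseudo} becomes $-(\trace A^2)^2-m(\trace A^2)-(r-2)m^2\alpha^2$, which is strictly negative, and no solution exists. The main technical obstacle is the bookkeeping in the fifth-row computation: tracking the signs of the eigenvalues prescribed by Table~\ref{Hypersurfaces-pseudo-sphere} (in particular the relation $\lambda_1\lambda_2=-1$) and organising the resulting polynomial identity so as to match $P_3(c)$; once this identification is carried out, all the remaining cases follow by elementary sign analysis.
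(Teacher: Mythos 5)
Your proposal is correct and takes essentially the same approach as the paper: apply Theorem~\ref{Th-existence-hypersurfaces-c>0-e-c<0-pseudo} and then run a case-by-case check through Abe's classification, and your computations of $\trace A^2$ and $\alpha$ for the rows of Table~\ref{Hypersurfaces-pseudo-sphere} agree with the paper's. The only (harmless) deviation is that for $c=0$ you bypass Abe's list (R-1)--(R-6) by noting directly that both alternatives force $\trace A^2=\sum_i\lambda_i^2=0$ and hence $A=0$, which is a legitimate shortcut given diagonalizability.
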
 
\begin{remark} Of course, it is straightforward to state explicitly the version of Corollary\link\ref{Cor-r-harmonic-pseudo-hyperspheres}, Theorem\link\ref{Th-r-harmonic-pseudo-Clifford} and Theorem\link\ref{Th-nostro-rigidity-2-curv} in the case that $c=-1$. However, for the reasons explained in Remark\link\ref{Rem-basta-c>0}, we omit the details.
\end{remark}
Next, we obtain some geometric results for triharmonic surfaces which provide a version in the pseudo-Riemannian case of some facts which we proved in \cite{MOR-space forms} in the Riemannian case.
\begin{theorem}\label{Th-non-existence-surfaces-c<=0}
Assume $r \geq 3$. Let $M^2_{t'}$ be a pseudo-Riemannian triharmonic CMC surface in $N^{3}_t(c)$ and assume that its shape operator $A$ is diagonalizable. If $\varepsilon c \leq 0$, then $M^2_{t'}$ is minimal.
\end{theorem}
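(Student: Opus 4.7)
The plan is to invoke Theorem~\ref{Th-existence-hypersurfaces-c>0-e-c<0-pseudo} and reduce the conclusion to an elementary sign analysis. Argue by contradiction: assume $M^2_{t'}$ is proper $r$-harmonic (hence non-minimal) and try to contradict the hypothesis $\varepsilon c \leq 0$. Because $A$ is diagonalizable over $\mathbb{R}$, in a principal frame $\{e_1,e_2\}$ one has $A(e_i)=k_i e_i$ with real principal curvatures $k_1,k_2$, and the identities \eqref{norma-A} and \eqref{mean-curv-fuction-pseudo} give
\begin{equation*}
\trace A^2 \;=\; k_1^2+k_2^2 \;\geq\; 0, \qquad \alpha \;=\; \frac{\varepsilon}{2}(k_1+k_2).
\end{equation*}
In particular, the first alternative of Theorem~\ref{Th-existence-hypersurfaces-c>0-e-c<0-pseudo}, namely $\trace A^2=0$, forces $k_1=k_2=0$, i.e., $A\equiv0$, which already contradicts non-minimality.

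Thus the second alternative \eqref{r-harmonicity-condition-in-general-pseudo} must hold; with $m=2$ and writing $S:=\trace A^2\geq0$, this reads
\begin{equation*}
\varepsilon S^{2} \;-\; 2c\,S \;-\; 4(r-2)\,c\,\alpha^{2} \;=\; 0.
\end{equation*}
If $c=0$, this collapses to $\varepsilon S^{2}=0$, again giving $A\equiv 0$. If $c\neq 0$ and $\varepsilon c<0$, multiplying through by $\varepsilon$ yields
\begin{equation*}
S^{2} \;=\; 2(\varepsilon c)\bigl[S+2(r-2)\alpha^{2}\bigr].
\end{equation*}
The bracket on the right is strictly positive, because $r\geq 3$, $S\geq 0$, and $\alpha\neq 0$ by non-minimality; combined with $\varepsilon c<0$ this makes the right-hand side strictly negative, while the left-hand side is non-negative, a contradiction. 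So no such proper $r$-harmonic $M^2_{t'}$ exists and $M^2_{t'}$ must be minimal.

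The whole argument is essentially an algebraic check once Theorem~\ref{Th-existence-hypersurfaces-c>0-e-c<0-pseudo} is available, so I do not expect a genuine obstacle. The one point that deserves emphasis is that the non-negativity $\trace A^{2}=k_1^2+k_2^2\geq 0$ is precisely what makes the sign analysis go through, and this depends on the diagonalizability assumption: if $A$ had a non-real or non-diagonalizable Jordan form (a genuinely pseudo-Riemannian possibility discussed in Section~\ref{Sec-results}), $\trace A^{2}$ could fail to be non-negative and the contradiction would collapse. This explains why diagonalizability is listed among the hypotheses, and why the non-diagonalizable case in $N^{3}_{1}(c)$ is treated separately (e.g.\ in Theorem~\ref{Th-Sasah-r>2}).
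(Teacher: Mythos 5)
There is a genuine gap at the very first step: you ``invoke Theorem~\ref{Th-existence-hypersurfaces-c>0-e-c<0-pseudo}'', but that theorem requires the hypothesis that $\trace A^2$ is \emph{constant}, and Theorem~\ref{Th-non-existence-surfaces-c<=0} does not assume this --- it only assumes CMC. For a CMC surface the sum $k_1+k_2$ is constant but $k_1^2+k_2^2$ need not be, so you cannot apply the classification of Theorem~\ref{Th-existence-hypersurfaces-c>0-e-c<0-pseudo} without first proving that $\trace A^2$ is constant. Your sign analysis of \eqref{r-harmonicity-condition-in-general-pseudo} (with $m=2$, $S=\trace A^2=k_1^2+k_2^2\geq 0$ by diagonalizability, and $\varepsilon c\leq 0$) is correct as far as it goes, but it only covers the case where the reduction to that algebraic equation is legitimate.

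The paper closes this gap by computing the full $3$-tension field of a CMC hypersurface \emph{without} assuming $\trace A^2$ constant (using Lemmas~\ref{Lemma-Delta-H} and~\ref{Lemma-Delta2-H} together with \eqref{tensor-curvature-N(c)}), which yields the system \eqref{tri-harmonicity-system-explicit}: a normal equation (i) containing the extra term $\Delta \trace A^2$, and a tangential equation (ii), namely $A\left(\grad \trace A^2\right)=0$. If $\trace A^2$ were non-constant, then on the open set $U$ where $\grad\trace A^2\neq 0$ condition (ii) forces $0$ to be a principal curvature on $U$; since $m=2$ and the surface is CMC, the other principal curvature equals $2\varepsilon\alpha$, a constant, so $\trace A^2=4\alpha^2$ is constant on $U$ --- a contradiction. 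Only after this step does one land in the constant-$\trace A^2$ situation, where your sign argument (and the paper's) applies. So you need to either reproduce the computation of the tangential part of $\tau_3(\varphi)$ and this dimension-two argument, or find another way to establish that $\trace A^2$ is constant; as written, the proof does not establish the theorem. (Your closing remark about why diagonalizability matters for the sign of $\trace A^2$ is correct and is indeed the other place where the hypothesis is used.)
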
  
Next, we focus on the case that the ambient space is $\s^{3}_t$. Our result is:
\begin{theorem}\label{Th-structure-surfaces-c>0}
Let $M^2_{t'}$ be a CMC proper triharmonic surface in $\s^3_t$ and assume that its shape operator $A$ is diagonalizable. Then $M^2_{t'}$ is an open part of the small pseudo-hypersphere $\s^2_t(3)$.
\end{theorem}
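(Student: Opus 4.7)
The plan is to proceed in two stages: first show that $\trace A^2$ is constant on $M^2_{t'}$, so that Theorem~\ref{Th-existence-hypersurfaces-c>0-e-c<0-pseudo} can be applied, and then invoke Theorem~\ref{Th-nostro-rigidity-2-curv}, together with the polynomial analysis of Theorem~\ref{Th-r-harmonic-pseudo-Clifford}, to identify $M^2_{t'}$ as the small pseudo-hypersphere $\s^2_t(3)$.

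Let $\{e_1,e_2\}$ be a local orthonormal frame of eigenvectors of the shape operator with $Ae_i=\lambda_i e_i$ and $\langle e_i,e_i\rangle=\varepsilon_i$. The CMC hypothesis gives $\varepsilon_1\lambda_1+\varepsilon_2\lambda_2=2\varepsilon\alpha=\text{const}$, and since $\trace A^2=\varepsilon_1\lambda_1^2+\varepsilon_2\lambda_2^2$, the constancy of $\trace A^2$ is equivalent to the constancy of each $\lambda_i$. To establish it, I expand $\tau_3(\varphi)=0$ using \eqref{2s+1-tension} with $s=1$, specialised to an isometric immersion into $N^3_t(1)$: by \eqref{tensor-curvature-N(c)} the curvature terms become explicit, while the CMC assumption implies that $\tau(\varphi)$ is a constant multiple of $\eta$ and hence parallel along the normal. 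Decomposing $\tau_3(\varphi)$ into its tangential and normal parts, the normal component becomes a second-order equation for $\trace A^2$ involving $\Delta(\trace A^2)$, $\alpha$, $\trace A^3$ and constants, whereas the tangential component reduces, after use of CMC and the Codazzi equations, to $\grad^M(\trace A^2)$ multiplied by a factor that does not vanish on the open subset $U=\{\lambda_1\neq\lambda_2\}$. One concludes that $\grad^M(\trace A^2)\equiv 0$ on $U$, hence by continuity on all of $M^2_{t'}$.

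With $\trace A^2$ constant, Theorem~\ref{Th-existence-hypersurfaces-c>0-e-c<0-pseudo} applied with $r=3$, $m=2$, $c=1$ gives either $\trace A^2=0$ or $\varepsilon(\trace A^2)^2-2\trace A^2-4\alpha^2=0$. In either case $\trace A^2$ and $\varepsilon_1\lambda_1+\varepsilon_2\lambda_2$ are constants, and a short algebraic argument, using that $\alpha\neq 0$ since $M^2_{t'}$ is proper and thus non-minimal, shows that $\lambda_1$ and $\lambda_2$ are individually constant. Since a surface with diagonalizable shape operator has at most two distinct principal curvatures, Theorem~\ref{Th-nostro-rigidity-2-curv} then forces $M^2_{t'}$ to be one of the examples in Corollary~\ref{Cor-r-harmonic-pseudo-hyperspheres} or Theorem~\ref{Th-r-harmonic-pseudo-Clifford}. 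For $m=2$ the only admissible generalised Clifford torus has $k=m-k=1$, but then \eqref{eq-k=m-k} requires $r\geq 5$, incompatible with $r=3$. Hence $M^2_{t'}$ is a small pseudo-hypersphere $\s^2_t(c)$, and Corollary~\ref{Cor-r-harmonic-pseudo-hyperspheres} forces $c=r=3$.

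The main obstacle lies in the tangential part computation of the second paragraph: in the pseudo-Riemannian setting one must track carefully the causal characters $\varepsilon_i$ and the sign factors appearing in \eqref{eq-legame-A-B} and \eqref{2s+1-tension}, and then exploit the Codazzi equations, available for a space-form ambient, to express the tangential derivatives of the connection coefficients in terms of derivatives of $\lambda_1,\lambda_2$ on $U$. The low dimension $m=2$ is essential here: it allows one to reduce the resulting algebraic system on $\grad^M(\trace A^2)$ to a scalar equation whose coefficient is manifestly non-vanishing on $U$, uniformly in the signature of the induced metric.
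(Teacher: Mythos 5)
Your proposal follows essentially the same route as the paper: the paper likewise derives the split form \eqref{tri-harmonicity-system-explicit} of the triharmonic equation, uses the tangential equation $A(\grad \trace A^2)=0$ together with CMC and $m=2$ to force $\trace A^2$ (hence both principal curvatures) to be constant, and then invokes Theorem~\ref{Th-nostro-rigidity-2-curv} plus the polynomial \eqref{condizione-pseudo-tori-Clifford}, whose only real root for $k=1$, $m=2$, $r=3$ is $c=2$ (the minimal torus), to conclude. The one imprecision to fix is that the tangential component is $2m\alpha\varepsilon\, A(\grad \trace A^2)$, not a nonvanishing scalar times $\grad \trace A^2$, so on the part of $U$ where one principal curvature vanishes you must, as the paper does, fall back on CMC to see that the other curvature, and hence $\trace A^2$, is constant there.
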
 
%

A widely studied family of hypersurfaces in the pseudo-Riemannian setting is that of isoparametric Lorentzian hypersurfaces of a Lorentzian space-form. For this specific topic, we refer to \cite{Magid, Xiao} and, for further background, to \cite{Hahn, Nomizu-pseudo}.
We recall that a Lorentzian hypersurface $M^m_1$ in a Lorentzian space form $N^{m+1}_1(c)$ is said to be \textit{isoparametric} if the minimal polynomial of the shape operator $A$ is constant on $M^m_1$. 
We know by \cite[Proposition~2.1]{Hahn}  that  $M^m_1$  has constant principal curvatures with constant algebraic multiplicities. 

Moreover, according to \cite[Chapter~9]{Neill} there exist bases where the shape operator $A$ assumes one of the following Jordan canonical forms:
\\

$
{\rm I}\quad \begin{bmatrix}
a_1&\cdots&0\\
\vdots& \ddots&\vdots\\
0&\cdots& a_m
\end{bmatrix}
\quad\hfill
{\rm II}\quad \begin{bmatrix}
a_0&0&&&\\
1&a_0&&&&\\
&&a_1&&\\
&&&\ddots&\\
&&&&a_{m-2}
\end{bmatrix}
$

$
{\rm III}\quad \begin{bmatrix}
a_0&0&0&&&\\
0&a_0&1&&&\\
-1&0&a_0&&&\\
&&&a_1&&\\
&&&&\ddots&\\
&&&&&a_{m-3}
\end{bmatrix}
\quad\hfill 
{\rm IV}\quad\begin{bmatrix}
a_0&b_0&&&\\
-b_0&a_0&&&&\\
&&a_1&&\\
&&&\ddots&\\
&&&&a_{m-2}
\end{bmatrix}
$
\\

Here $b_0$ is assumed to be non-zero.  In cases I, II and III the eigenvalues are
real, while $a_0 \pm i b_0$ are complex eigenvalues in case IV. A 
Lorentzian isoparametric hypersurface in $N^{m+1}_1(c)$ is called of type I, II, III or IV  according to the form of its shape operator $A$.

We observe that a Lorentzian isoparametric hypersurface in $N^{m+1}_1(c)$ is a CMC hypersurface with $\trace A^2$ constant. Therefore, Theorem\link\ref{Th-existence-hypersurfaces-c>0-e-c<0-pseudo} applies to this type of hypersurfaces.  

Now, a direct computation shows that for a Lorentzian isoparametric hypersurface of type
I, II and III, with $\trace A\neq 0$, we have  $\trace A^2>0$. Differently, for hypersurfaces of type IV it is possible to have $\trace A\neq 0$ and $\trace A^2\leq0$. If the curvature of the ambient space is $c=0$, we easily deduce from Theorem~\ref{Th-existence-hypersurfaces-c>0-e-c<0-pseudo} that a non-minimal Lorentzian isoparametric hypersurface is $r$-harmonic ($r\geq 2$) if and only if $\trace A^2= 0$.
These observations prove our first result in this context, that is
\begin{proposition}\label{Th-magid} Assume $r \geq 2$ and $m\geq2$. Let $M^m_1$ be an isoparametric Lorentzian $r$-harmonic hypersurface in the flat Lorentzian space form $\R^{m+1}_1$. Then either $M^m_1$ is minimal or its shape operator is of type IV.
\end{proposition}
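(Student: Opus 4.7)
The plan is to reduce everything to a single algebraic condition on $\trace A^2$ by applying Theorem~\ref{Th-existence-hypersurfaces-c>0-e-c<0-pseudo} with $c=0$, and then inspect the four Jordan normal forms one by one.

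First I would specialise Theorem~\ref{Th-existence-hypersurfaces-c>0-e-c<0-pseudo} to the flat case. Since $c=0$, the biharmonic condition \eqref{2-harmonicity-condition-in-general-pseudo} becomes $\varepsilon\,\trace A^2=0$, while in the case $r\geq 3$ the alternative \eqref{r-harmonicity-condition-in-general-pseudo} collapses to $\varepsilon(\trace A^2)^2=0$, which together with \eqref{eq-Tr-A^2=0} also forces $\trace A^2=0$. An isoparametric Lorentzian hypersurface is automatically CMC with $\trace A^2$ constant, so the theorem applies and tells us that a non-minimal $r$-harmonic $M^m_1\hookrightarrow \R^{m+1}_1$ must satisfy
\begin{equation*}
\trace A^2=0 \,.
\end{equation*}

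Next I would compute $\trace A^2$ directly from each Jordan canonical form, using the basis-independent identity $\trace A^2=\sum_{i,j}A_{ij}A_{ji}$, which is valid regardless of the causal character of the basis and therefore sidesteps any delicate issue about null frames in cases II and III. The off-diagonal entries contribute only through the cross terms $2A_{ij}A_{ji}$, so:
\begin{equation*}
\begin{array}{ll}
\textrm{Type I:} & \trace A^2=\sum_{i=1}^{m}a_i^{2},\\[1mm]
\textrm{Type II:} & \trace A^2=2a_0^{2}+\sum_{i=1}^{m-2}a_i^{2}\quad(\text{since }2\cdot 0\cdot 1=0),\\[1mm]
\textrm{Type III:} & \trace A^2=3a_0^{2}+\sum_{i=1}^{m-3}a_i^{2}\quad(\text{since }2\cdot 0\cdot(\pm 1)=0),\\[1mm]
\textrm{Type IV:} & \trace A^2=2a_0^{2}-2b_0^{2}+\sum_{i=1}^{m-2}a_i^{2}.
\end{array}
\end{equation*}
Thus in types I, II and III the quantity $\trace A^2$ is a non-negative sum of squares, whereas only in type IV the non-symmetric block produces a negative contribution $2A_{12}A_{21}=-2b_0^{2}$.

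Finally, if the shape operator were of type I, II or III, the equality $\trace A^2=0$ would force every real eigenvalue (counted with its algebraic multiplicity) to vanish. In particular $\trace A=0$, and then by \eqref{mean-curv-fuction-pseudo} the mean curvature $f=(1/m)\varepsilon\,\trace A$ is identically zero, contradicting the hypothesis that $M^m_1$ is non-minimal. Hence the shape operator can only be of type~IV, which is precisely the configuration in which $b_0\neq 0$ allows $\trace A^2=0$ while $\trace A=2a_0+a_1+\ldots+a_{m-2}$ remains non-zero. The statement follows. No step here is genuinely delicate: the only point worth emphasising is that the Jordan forms of types~II and III, despite appearing in non-orthonormal bases, contribute to $\trace A^2$ as ordinary sums of squares because the Jordan $1$'s sit in positions $(i,j)$ whose transposed entries $(j,i)$ vanish.
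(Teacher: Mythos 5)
Your proposal is correct and follows essentially the same route as the paper: the authors likewise observe that an isoparametric Lorentzian hypersurface is CMC with $\trace A^2$ constant, deduce from Theorem~\ref{Th-existence-hypersurfaces-c>0-e-c<0-pseudo} with $c=0$ that non-minimal $r$-harmonicity forces $\trace A^2=0$, and note via a direct computation that types I, II and III with $\trace A\neq 0$ have $\trace A^2>0$, leaving type IV as the only possibility. Your explicit use of the basis-independent identity $\trace A^2=\sum_{i,j}A_{ij}A_{ji}$ simply fills in the details of the "direct computation" the paper leaves to the reader.
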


In \cite[Theorem~4.10]{Magid} Magid gave a proof that there exist no isoparametric Lorentzian hypersurfaces of type IV in the flat Lorentzian space form $\R^{m+1}_1$. However, it was pointed out in \cite{RamosVasquezLopez} that there could be some gaps in the arguments of \cite{Magid}.

By contrast, in the case that the curvature of the ambient space is $c=-1$, we shall construct some new examples of proper $r$-harmonic Lorentzian isoparametric surfaces in $\H^{3}_1$ with non-diagonalizable shape operator of type IV.
More precisely, first, as above, we observe that Theorem\link\ref{Th-existence-hypersurfaces-c>0-e-c<0-pseudo} implies that a Lorentzian isoparametric hypersurface in $\H^{m+1}_1$ can be $r$-harmonic only if its shape operator is non-diagonalizable and of type IV. 

The following result proves that {\em complex circles} (see \cite{Magid2}) provide a geometrically significant example of such hypersurfaces: 

\begin{theorem}\label{Th-Xiao-c=-1}   Let $M^2_1$ be a complex circle in $\H^{3}_1\subset \R^4_2$ parametrized by 
\[
\begin{split}
x(s,t)= &\big\{b \cos (s) \cosh (t)-a \sin (s)
   \sinh (t),\\
   &a \cos (s) \sinh (t)+b
   \sin (s) \cosh (t),\\
   &a \cos (s)
   \cosh (t)+b \sin (s) \sinh (t),\\
   &b
   \cos (s) \sinh (t)-a \sin (s)
   \cosh (t)\big\}\,,
   \end{split}
\]
where $a$ and $b$ are real numbers such that $b^2-a^2=1$ and $ab\neq 0$.
%
Then $M^2_1$ is proper $r$-harmonic provided that either
\begin{enumerate}
\item  $r>2$ and
\[
{a^2=\frac{\sqrt{2}}{2}-\frac{1}{2}\,,\quad 
b^2=\frac{\sqrt{2}}{2}+\frac{1}{2}}
\]
 or
\item $r=3$ and
\[
 a^2=\frac{\sqrt{3}}{3}-\frac{1}{2}\,, \quad  b^2=\frac{\sqrt{3}}{3}+\frac{1}{2}\,.
\]
\end{enumerate}
\end{theorem}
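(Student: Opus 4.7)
The strategy is to reduce the statement to a direct application of Theorem~\ref{Th-existence-hypersurfaces-c>0-e-c<0-pseudo} in the setting $m=2$, $c=-1$, $t=t'=1$, so the main task is to compute, for the given parametrization $x(s,t)$, the geometric data $\alpha$, $\varepsilon$ and $\trace A^2$ and then verify the alternatives
$$\trace A^2=0 \qquad \text{or} \qquad \varepsilon(\trace A^2)^2-mc(\trace A^2)-(r-2)m^2c\alpha^2=0.$$
Once this is done, the two cases in the theorem correspond precisely to these two alternatives: case (1) to $\trace A^2=0$ (which is $r$-independent and therefore works for every $r\geq 3$), and case (2) to the polynomial condition evaluated at $r=3$.

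First I would compute $x_s,x_t$ in $\R^4_2$ and check, using $b^2-a^2=1$, that the induced metric is Lorentzian and diagonal in $(s,t)$; one expects $\langle x_s,x_s\rangle=-\langle x_t,x_t\rangle$ up to sign, so that a local orthonormal frame $\{e_1,e_2\}$ with $\varepsilon_1=-1,\varepsilon_2=1$ is obtained by rescaling $x_s$ and $x_t$. Next I would produce a unit normal $\eta$ to $M^2_1$ inside $\H^3_1$: the position vector $x$ is the unit normal to $\H^3_1$ in $\R^4_2$ (with $\langle x,x\rangle=-1$), and $\eta$ is then determined up to sign by being orthogonal to $x,x_s,x_t$, which fixes $\varepsilon=\langle\eta,\eta\rangle$. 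A direct calculation of $\nabla^N_{e_i}\eta$ should give, in the frame $\{e_1,e_2\}$, a $2\times 2$ matrix of shape-operator type IV,
$$A=\begin{pmatrix} a_0 & b_0 \\ -b_0 & a_0 \end{pmatrix},$$
with $a_0$ and $b_0\neq 0$ explicit constants depending only on $a,b$. In particular $M^2_1$ is isoparametric, hence CMC with $\alpha=a_0$ constant and $\trace A^2=2(a_0^2-b_0^2)$ constant, so the hypotheses of Theorem~\ref{Th-existence-hypersurfaces-c>0-e-c<0-pseudo} are satisfied.

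At that point it remains to substitute the explicit expressions of $a_0,b_0$ in terms of $a,b$ into the two alternatives above. The equation $\trace A^2=0$ translates into $a_0^2=b_0^2$, which combined with $b^2-a^2=1$ should pin down $a^2=(\sqrt{2}-1)/2$, $b^2=(\sqrt{2}+1)/2$, giving case (1) valid for all $r>2$. The second alternative, after using $m=2$ and $c=-1$, becomes
$$\varepsilon(\trace A^2)^2+2(\trace A^2)+4(r-2)\alpha^2=0,$$
which is a single algebraic condition on $a^2,b^2,r$; imposing $b^2-a^2=1$ and $r=3$ and solving should yield $a^2=\sqrt{3}/3-1/2$, $b^2=\sqrt{3}/3+1/2$, which is case (2). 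The main obstacle I expect is the shape-operator computation: the parametrization mixes $\cos s,\cosh t,\sin s,\sinh t$ in a symmetric way, so one must carry the calculation of $\nabla^N_{e_i}\eta$ carefully and use the constraint $b^2-a^2=1$ at several simplification steps in order to reach the clean type IV normal form from which the application of Theorem~\ref{Th-existence-hypersurfaces-c>0-e-c<0-pseudo} becomes purely algebraic.
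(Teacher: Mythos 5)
Your proposal follows essentially the same route as the paper: compute the shape operator of the complex circle (the paper finds $A=\tfrac{1}{a^2+b^2}\bigl(\begin{smallmatrix} 2ab & 1\\ -1 & 2ab\end{smallmatrix}\bigr)$, i.e.\ type IV with $a_0=2ab/(a^2+b^2)$, $b_0=1/(a^2+b^2)$, exactly as you predict), note that $ab\neq 0$ gives non-minimality, and then feed $\trace A^2=2(a_0^2-b_0^2)$ and $\alpha=a_0$ into the two alternatives of Theorem~\ref{Th-existence-hypersurfaces-c>0-e-c<0-pseudo} together with the constraint $b^2-a^2=1$; your algebra for both cases matches the paper's. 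The only content you leave implicit is the explicit curvature computation itself, and the paper additionally observes that alternative \eqref{r-harmonicity-condition-in-general-pseudo} forces $a^2+b^2=2/\sqrt{r}>1$, hence $r=3$, so the listed examples are in fact the only ones in this family --- but this is not needed for the ``provided that'' direction you were asked to prove.
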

\begin{remark} The shape operator $A$ of the $r$-harmonic complex circles obtained in Theorem\link\ref{Th-Xiao-c=-1} is non-diagonalizable and of type IV. As we shall see in the proof of Theorem\link\ref{Th-Xiao-c=-1}, the instances of Case $(1)$ have $\trace A^2=0$, while those of Case $(2)$ have $\trace A^2<0$. In accordance with the result of Sasahara (see \cite[Theorem~5.4]{Sasahara}), the family of isoparametric surfaces studied in Theorem\link\ref{Th-Xiao-c=-1} does not contain any proper biharmonic immersion.
\end{remark}

Next, we recall that a \textit{null curve} $\gamma(s)$ in $N^3_1(c) \subset \R^4_t$ is a smooth curve such that $\langle \gamma'(s), \gamma'(s)\rangle \equiv 0$. A {\em $B$-scroll} over a null curve $\gamma(s)$ is a surface of index $1$ in $N^3_1(c)$ parametrized by
\begin{equation}\label{eq-B-Scroll}
x(s,u)=\gamma(s)+u\,B(s) \,,
\end{equation}
where $\{A,B,C \}$ is a pseudo-orthonormal frame field, or a \textit{Cartan frame field}, along $\gamma(s)$, i.e.,
\begin{equation}\label{eq-Cartan-frame}
\begin{split}
\langle A,A\rangle &=\langle B,B\rangle =0\,, \qquad \langle A,B\rangle=-1 \,, \\ 
\langle A,C\rangle &=\langle B,C\rangle =0\,, \qquad \langle C,C\rangle=1\\
&{\rm and} \\
\gamma'(s)&=A(s) \,,\\
C'(s)&=- \lambda \, A(s)- k(s)\,B(s) \,,
\end{split}
\end{equation}
where $\lambda$ is a real constant and $k(s) \neq 0$ (see \cite{Alias, Hahn, Sasahara} for details).

We obtain a version of \cite[Theorem~5.4]{Sasahara} in the case that $r \geq3$. More precisely, we shall prove:
\begin{theorem}\label{Th-Sasah-r>2} Assume that $r \geq3$. Let $M^2_{t'}$ be an isoparametric pseudo-Riemannian surface in a $3$-dimensional Lorentz space form $N^3_1(c)$, where $c \in \{-1,1\}$. 
Then $M^2_{t'}$ is proper $r$-harmonic if and only if it is congruent to an open subset of one of the following:
\begin{itemize}
\item[(1)] $\s^2_1(r) \subset \s^3_1$;
\item[(2)] $\H^2(-r) \subset \H^3_1$;
\item[(3)] $\s^{1}_{\ell}(c) \times \s^{1}_{1-\ell}\left (\frac{c}{c-1}\right ) \subset \s^3_1$, where $0 \leq \ell \leq 1$ and 
\begin{equation*}\label{eq-condiz-c}
c=\frac{r \pm \sqrt{r^2-4r}}{2} \qquad \,\, {(r \geq 5)} \,;
\end{equation*}
\item[(4)] $\H^{1}_{1-\ell}(-c) \times \H^{1}_{\ell}\left (-\,\frac{c}{c-1}\right ) \subset \H^3_1$, where $0 \leq \ell \leq 1$ and $c$ is as in Case~{\rm (3)};
\item[(5)] a $B$-scroll over a null curve in $\s^3_1$ whose Gauss curvature $K$ is constant and equal to $r$;
\item[(6)] an $r$-harmonic complex circle in $\H^3_1$ of the type described in Theorem\link\ref{Th-Xiao-c=-1}.
\end{itemize}
\end{theorem}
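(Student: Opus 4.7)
The plan is to exploit the algebraic characterisation in Theorem~\ref{Th-existence-hypersurfaces-c>0-e-c<0-pseudo} together with the Jordan classification of the shape operator recalled in Section~\ref{Sec-results}. Since $M^2_{t'}$ is isoparametric, by \cite[Proposition~2.1]{Hahn} the principal curvatures are constant with constant algebraic multiplicities, so $\alpha$ and $\trace A^2$ are constant and Theorem~\ref{Th-existence-hypersurfaces-c>0-e-c<0-pseudo} applies directly. For $m=2$ only the Jordan types I, II and IV can appear, since type III requires $m\ge 3$; moreover, if $M^2_{t'}$ is space-like then $A$ is automatically diagonalizable and the analysis reduces to type I.

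For type I the surface has at most two distinct constant principal curvatures, so Theorem~\ref{Th-nostro-rigidity-2-curv} provides a finite list of candidates. When $c=1$, applying it in $\s^3_1$ and combining Corollary~\ref{Cor-r-harmonic-pseudo-hyperspheres} with Theorem~\ref{Th-r-harmonic-pseudo-Clifford} in the case $k=m-k=1$ of \eqref{eq-k=m-k} yields cases (1) and (3); the space-like Lorentzian-ambient candidates are excluded by Corollary~\ref{Cor-no-space-like}. When $c=-1$, I invoke Remark~\ref{Rem-basta-c>0} to identify $\H^3_1$ with $\s^3_2$ up to a sign of the metric and translate the same classification back: the hyperbolic dual of a small pseudo-hypersphere produces the space-like example of case (2), while the hyperbolic duals of the pseudo-Clifford tori with equal-dimensional factors produce the surfaces of case (4) with the stated indices.

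For type II the classification of Lorentzian isoparametric surfaces (see \cite{Magid, Xiao}) implies that $M^2_1$ is, up to congruence, a $B$-scroll over a null curve. In a Cartan frame satisfying \eqref{eq-Cartan-frame}, passing to the orthonormal frame $\{(A+B)/\sqrt{2},(A-B)/\sqrt{2}\}$ and using \eqref{eq-legame-A-B}, a direct computation gives $\trace A^2=2a_0^2$, mean curvature $\alpha=a_0$ and $\varepsilon=1$. Substituting in Theorem~\ref{Th-existence-hypersurfaces-c>0-e-c<0-pseudo} reduces proper $r$-harmonicity to $a_0^2\bigl(a_0^2-c(r-1)\bigr)=0$. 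For $c=1$ the non-minimal root $a_0^2=r-1$ then gives, via the Gauss equation, Gauss curvature $K=c+\varepsilon\det A=r$, producing case (5); for $c=-1$ the equation has no real solution, so $B$-scrolls contribute nothing in $\H^3_1$.

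For type IV the classification of Lorentzian isoparametric surfaces in $N^3_1(c)$ (see \cite{Magid, Magid2, Xiao}) shows that the only congruence class arising, for $c\in\{-1,1\}$, is the family of complex circles in $\H^3_1$, and Theorem~\ref{Th-Xiao-c=-1} isolates among them those that are proper $r$-harmonic, producing case (6). The main obstacle in this programme is the careful bookkeeping of indices and signature conventions under the duality $\H^3_1\leftrightarrow\s^3_2$ in the type I step and under the null-to-orthonormal change of frame in the type II computation, together with a clean use of the existing classification results to guarantee that no further isoparametric shape-operator classes are overlooked.
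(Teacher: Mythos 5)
Your overall strategy is the same as the paper's: deduce constancy of $\alpha$ and $\trace A^2$ from \cite[Proposition~2.1]{Hahn}, apply the algebraic criterion of Theorem~\ref{Th-existence-hypersurfaces-c>0-e-c<0-pseudo}, split according to the Jordan type of $A$ (correctly discarding type III for $m=2$), use Theorem~\ref{Th-nostro-rigidity-2-curv} together with the duality of Remark~\ref{Rem-basta-c>0} for the diagonalizable case, and treat type II via $B$-scrolls and type IV via complex circles. Your type II computation ($\trace A^2=2a_0^2$, $\alpha=a_0$, $\varepsilon=1$, hence $a_0^2=c(r-1)$ and $K=c+\det A=r$ for $c=1$) agrees with the paper's, which works in the null coordinate frame where $A$ is lower triangular.

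There is, however, one genuine gap: the structural facts for the non-diagonalizable types are sourced from \cite{Magid} (flat ambient $\R^{m+1}_1$) and \cite{Xiao} (anti-de Sitter ambient $\H^{n+1}_1$), neither of which classifies isoparametric Lorentzian surfaces in the de Sitter space $\s^3_1$. For type II this is ultimately harmless, since the $B$-scroll description in $\s^3_1$ can be obtained from \cite[Proposition~4.1]{Alias}, which is what the paper uses. For type IV it is not cosmetic: the algebraic criterion alone does \emph{not} exclude a proper $r$-harmonic type IV surface in $\s^3_1$ --- for instance $a_0^2=b_0^2\neq 0$ gives $\trace A^2=0$ with $\trace A\neq 0$, which satisfies condition \eqref{eq-Tr-A^2=0} --- so the classification is complete only if one proves that no type IV isoparametric surface can occur in $\s^3_1$ (or that none is $r$-harmonic there). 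The paper closes this by appealing to \cite[p.~453]{Alias}, which forces a surface with complex principal curvatures (under the standing constancy hypotheses) to be a flat Lorentzian surface in $\H^3_1$ with parallel second fundamental form, and only then invokes \cite{Magid2} to identify it with a complex circle and conclude via Theorem~\ref{Th-Xiao-c=-1}. You need to cite or reproduce that step; as written, the case $c=1$, type IV is not closed.
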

\begin{remark} Comparison of \cite[Theorem~5.4]{Sasahara} with our Theorem\link\ref{Th-Sasah-r>2} shows that the relevant differences between the biharmonic case and the case $r \geq 3$ is the appearance in Theorem\link\ref{Th-Sasah-r>2} of the family of solutions of type $(6)$ for all $r \geq3$, and of type $(3)$ and $(4)$ for $r \geq 5$.  These facts, together with Theorem\link\ref{Th-Xiao-c=-1}, support the following general idea: we can find geometrically interesting situations where there is no biharmonic instance, but there exist examples of $r$-harmonic immersions, $r \geq 3$. Moreover, comparing the result of the present paper with those of \cite{MOR-space forms}, we see that the notion of $r$-harmonicity is more flexible in the pseudo-Riemannian setup compared to the Riemannian case as it allows for a larger class of solutions. In this order of ideas, we also cite the phenomenon illustrated in Remark\link\ref{Remark-appendix} below.
\end{remark}

\begin{example}\label{Example-B-scroll} First, we point out that the family of $r$-harmonic surfaces in $\s^3_1$ obtained in Case (5) of Theorem\link\ref{Th-Sasah-r>2} is very ample. This is a consequence of the fact that, given any real constant $\lambda$ and smooth function $k(s)$, it is always possible to determine (at least locally) a null curve $\gamma$ such that its associated $B$-scroll \eqref{eq-B-Scroll} verifies \eqref{eq-Cartan-frame}. As proved in \cite{Kim}, the existence of such a null curve $\gamma$ can be deduced by solving a suitable Cauchy problem for a first order linear system of ordinary differential equations. In the proof of Theorem\link\ref{Th-Sasah-r>2} we shall show that $K=\lambda^2+1$ and any such surface is proper $r$-harmonic provided that $\lambda^2=r-1\, (r \geq 2)$. 

As a special case, following the procedure described in the Appendix of \cite{Kim}, here we give the explicit expression of the null curve $\gamma(s)$ and of its associated vector field $B(s)$ assuming that $k(s) \equiv 1$. Note that when, as in our example, the function $k(s)$ is bounded, $\gamma(s)$ is defined on the whole $\R$.  To this end, let 
{\footnotesize 
\[
\begin{split}
\gamma(s)=&\Big\{
\frac{\sqrt{c} (c+d-2) \sin(\sqrt{d} s)+2 c \cos
   (\sqrt{d} s)+\sqrt{d} (c+d+2) \sinh (\sqrt{c}
   s)+2 d \cosh(\sqrt{c} s)}{2 (c+d)},\\
   &
   \frac{
   \sqrt{c} (c+d) \sin(\sqrt{d} s)+2 c \cos(\sqrt{d}
   s)+ \sqrt{d} (c+d) \sinh(\sqrt{c} s)+2d
   \cosh(\sqrt{c} s)}{2(c+d)},\\
   &
   \frac{-\sqrt{c} \sin(\sqrt{d} s)+c \cos(\sqrt{d} s)+\sqrt{d} \sinh(\sqrt{c} s)+d \cosh(\sqrt{c}
   s)}{c+d},\\
   &
   \frac{\cos(\sqrt{d} s)-\cosh(\sqrt{c}
   s)}{c+d}\Big\}
   \end{split}
\]
}
where
\[
c=\sqrt{1+\lambda^2}+\lambda\,,\quad d=\sqrt{1+\lambda^2}-\lambda\,.
\]
Then $\gamma(s)$ is a null curve in $\s^3_1$ with associated vector field 
{\footnotesize 
\[
\begin{split}
B(s)=&\frac{1}{4}\Big\{2 \sqrt{c} \sin(\sqrt{d} s)+(2-c-d)
   \cos(\sqrt{d} s)+2 \sqrt{d} \sinh(\sqrt{c}
   s)+(2+c+d) \cosh(\sqrt{c} s) ,\\
   &
   2 \sqrt{c} \sin(\sqrt{d} s)+2 \sqrt{d} \sinh(\sqrt{c} s)+(c+d) \left(\cosh(\sqrt{c} s)-\cos(\sqrt{d} s)\right),\\
   &
    2\sqrt{c} \sin(\sqrt{d} s)+2\sqrt{d} \sinh(\sqrt{c} s)+2\cosh(\sqrt{c} s)+2\cos(\sqrt{d}
   s),\\
   &
   2\sqrt{d} \sin(\sqrt{d}
   s)-2\sqrt{c} \sinh(\sqrt{c} s)\Big\}
\end{split}
\]
}
and the parametrized surface 
\[
x(s,u)=\gamma(s)+u\,B(s)
\]
defines a $B$-scroll with Gauss curvature $K=1+\lambda^2$ and $k(s)\equiv1$. Choosing $\lambda=\pm\sqrt{r-1}$ we obtain the desired $r$-harmonic surface in $\s^3_1$.

For the sake of completeness, the method of computation used to determine the curve $\gamma(s)$ and the vector field $B(s)$ will be given at the end of this paper, in Appendix\link\ref{Appendix}.
\end{example}

\section{Proofs}\label{Sec-proofs}
As a preliminary step, in the following lemma we state without proof some standard facts which we shall use in this section.
\begin{lemma}\label{Lemma-tecnico-1} Let $\varphi:M^m_{t'} \to  N^{m+1}_t(c)$ be a pseudo-Riemannian hypersurface. Let $A$ denote the shape operator and $f=(1/m)\varepsilon  \trace A$ the mean curvature function. Then
\begin{itemize}
\item[{\rm (a)}] $(\nabla A) (\cdot,\cdot)$ is symmetric;
\item[{\rm (b)}] $\langle (\nabla A) (\cdot,\cdot), \cdot \rangle$ is totally symmetric;
\item[{\rm (c)}] $\trace (\nabla A) (\cdot,\cdot)= m \,\varepsilon\,\grad f$. 
\end{itemize}
\end{lemma}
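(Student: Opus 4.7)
The plan is to establish each item in turn, following the standard Riemannian argument and tracking the sign factors $\varepsilon_i$ and $\varepsilon$ that appear in the pseudo-Riemannian setting. For (a), I would start from the identity $R^N(X,Y)\eta=\nabla^N_X\nabla^N_Y\eta-\nabla^N_Y\nabla^N_X\eta-\nabla^N_{[X,Y]}\eta$ and expand using the Weingarten and Gauss formulas \eqref{Gauss-Weingarten-eq} together with the relation \eqref{eq-legame-A-B} between $B$ and $A$. The normal contributions carry the factor $\varepsilon\langle A(X),A(Y)\rangle\eta$ and cancel by the self-adjointness of $A$, so the remaining tangential expression is precisely $(\nabla_Y A)(X)-(\nabla_X A)(Y)$. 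Because the ambient is a space form, the curvature formula \eqref{tensor-curvature-N(c)} yields $R^{N(c)}(X,Y)\eta=c(\langle Y,\eta\rangle X-\langle X,\eta\rangle Y)=0$ for tangent $X,Y$, giving the Codazzi identity $(\nabla A)(X,Y)=(\nabla A)(Y,X)$.

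For (b), the key point is that the trilinear form $(X,Y,Z)\mapsto\langle(\nabla_X A)(Y),Z\rangle$ is symmetric in the last two arguments. Indeed, differentiating the self-adjointness relation $\langle A(Y),Z\rangle=\langle Y,A(Z)\rangle$ in the direction of $X$ and expanding via the compatibility of $\nabla^M$ with $g$ together with the Leibniz rule for $\nabla_X A$, the terms involving $\nabla^M_X Y$ and $\nabla^M_X Z$ cancel in pairs by self-adjointness of $A$, leaving $\langle(\nabla_X A)(Y),Z\rangle=\langle Y,(\nabla_X A)(Z)\rangle$. Combined with the symmetry in the first two arguments from (a), this produces total symmetry under any permutation of $X,Y,Z$.

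For (c), I would evaluate at an arbitrary point $p\in M^m_{t'}$ using a local orthonormal frame $\{e_i\}_{i=1}^m$ chosen so that $(\nabla^M_{e_i}e_j)_p=0$; such a geodesic frame exists in the pseudo-Riemannian setting just as in the Riemannian one. Then, using the last identity in \eqref{key-formulas-pseudo} to rewrite the trace and the total symmetry from (b), for any tangent vector $X$ at $p$,
\[
\left\langle \sum_{i=1}^m\varepsilon_i(\nabla_{e_i}A)(e_i),\,X\right\rangle=\sum_{i=1}^m\varepsilon_i\langle(\nabla_X A)(e_i),e_i\rangle=X(\trace A).
\]
Since $\trace A=m\varepsilon f$ (from $f=(1/m)\,\varepsilon\,\trace A$ and $\varepsilon^2=1$), the right-hand side equals $m\varepsilon\langle\grad f,X\rangle$, and nondegeneracy of $g$ yields the claimed formula.

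The only genuine obstacle here is the careful bookkeeping of the signs $\varepsilon_i$ that appear in the pseudo-Riemannian trace and the overall normal sign $\varepsilon=\langle\eta,\eta\rangle\in\{\pm 1\}$ entering the definition of $f$; no idea beyond the Riemannian derivation is required, but one has to verify that these signs enter symmetrically so as to cancel in the appropriate places.
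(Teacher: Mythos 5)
Your proof is correct: the Codazzi argument via $R^{N(c)}(X,Y)\eta=0$ for (a), differentiating the self-adjointness identity $\langle A(Y),Z\rangle=\langle Y,A(Z)\rangle$ for (b), and the geodesic-frame trace computation combined with $\trace A=m\,\varepsilon\, f$ for (c) all go through, with the signs $\varepsilon_i$ and $\varepsilon$ entering exactly as you describe. The paper states this lemma explicitly \emph{without proof} as a collection of standard facts, so there is no argument to compare against; yours is the expected one.
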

Next, we perform our first computation:
\begin{lemma}\label{Lemma-Delta-H} Let $\varphi:M^m_{t'} \to  N^{m+1}_t(c)$ be a pseudo-Riemannian hypersurface and denote by $\eta$ the unit normal vector field. Then
\begin{equation}\label{Delta-H-formula}
 \overline{\Delta} {\mathbf H}= (\Delta f + f \varepsilon \trace A^2) \eta + 2 A(\grad f ) + m f \varepsilon \grad f \,.
\end{equation}
\end{lemma}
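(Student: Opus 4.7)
The plan is to write $\mathbf{H}=f\eta$ and directly apply the definition \eqref{roughlaplacian-pseudo} of the rough Laplacian to the section $f\eta$ of $\varphi^{-1}TN$, using the Gauss and Weingarten formulas \eqref{Gauss-Weingarten-eq} together with Lemma~\ref{Lemma-tecnico-1}. Choose a local orthonormal frame $\{e_i\}_{i=1}^m$ tangent to $M^m_{t'}$ and, without loss of generality at a point, arrange $\nabla^M_{e_i}e_i=0$ at that point to lighten the computation; the formula is tensorial so the result extends. The first step is to compute $\nabla^\varphi_{e_i}(f\eta)=e_i(f)\eta+f\nabla^N_{e_i}\eta=e_i(f)\eta-fA(e_i)$ using Weingarten.

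Applying $\nabla^\varphi_{e_i}$ once more yields
\[
\nabla^\varphi_{e_i}\nabla^\varphi_{e_i}(f\eta)=e_i(e_i(f))\eta-2\,e_i(f)A(e_i)-f\,\nabla^N_{e_i}\bigl(A(e_i)\bigr),
\]
where one uses Weingarten again to differentiate $\eta$ and treats $A(e_i)$ as a tangent vector field. I would then decompose $\nabla^N_{e_i}A(e_i)$ via Gauss into its tangential part $\nabla^M_{e_i}A(e_i)=(\nabla A)(e_i,e_i)+A(\nabla^M_{e_i}e_i)$ and its normal part $B(e_i,A(e_i))=\varepsilon\langle A(e_i),A(e_i)\rangle\,\eta$ coming from \eqref{eq-legame-A-B}.

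Next, multiply by $-\varepsilon_i$ and sum over $i$. The purely second-order scalar piece reassembles as $(\Delta f)\eta$ according to the definition in \eqref{key-formulas-pseudo}. The cross term $-2\sum_i\varepsilon_i e_i(f)A(e_i)$ collapses to $2A(\grad f)$ thanks to the first identity in \eqref{key-formulas-pseudo}. For the remaining tangential contribution, Lemma~\ref{Lemma-tecnico-1}(c) gives
\[
\sum_{i=1}^m\varepsilon_i(\nabla A)(e_i,e_i)=m\,\varepsilon\,\grad f,
\]
so the tangential components produce the term $mf\varepsilon\grad f$. The normal contributions from $B(e_i,A(e_i))$ sum, via \eqref{norma-A}, to $f\varepsilon\,\trace A^2\,\eta$. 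Adding everything gives exactly \eqref{Delta-H-formula}.

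The computation is essentially bookkeeping, and the only subtle point is keeping track of signs: the index-$\varepsilon_i$ factors from \eqref{roughlaplacian-pseudo} combine with the $\varepsilon$ factor arising from $B(\cdot,\cdot)=\varepsilon\langle A\cdot,\cdot\rangle\eta$ in a way that makes the coefficient of $\eta$ come out as $f\,\varepsilon\trace A^2$ rather than $f\trace A^2$ (the Riemannian case being recovered by $\varepsilon=1$). I anticipate this sign accounting, rather than any conceptual difficulty, to be the main source of potential error; writing the computation first at a point where $\nabla^M_{e_i}e_i=0$ makes it transparent, and then invoking tensoriality concludes.
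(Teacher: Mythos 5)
Your proposal is correct and follows essentially the same route as the paper's own proof: expand $\overline{\Delta}(f\eta)$ in a geodesic frame, use the Weingarten formula for $\nabla^N_{e_i}\eta$, split $\nabla^N_{e_i}A(e_i)$ into tangential and normal parts via Gauss and \eqref{eq-legame-A-B}, and collect terms using \eqref{key-formulas-pseudo}, \eqref{norma-A} and Lemma~\ref{Lemma-tecnico-1}(c). The sign bookkeeping you flag is handled exactly as you describe, and all four terms of \eqref{Delta-H-formula} come out as claimed.
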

\begin{proof} We work with a geodesic frame field $\left \{ X_i \right \}_{i=1}^m$ around an arbitrarily fixed point $p \in M^m_{t'}$. Also, we simplify the notation writing $\nabla$ for $\nabla^{M}$. Since ${\mathbf H}=f \eta$, around $p$ we have:
\[
\nabla_{X_i}^{\varphi} {\mathbf H}=\nabla_{X_i}^{\perp} {\mathbf H}-A_{{\mathbf H}}(X_i)=\left ( X_i f \right )\eta -f A  \left (X_i \right )\,.
\]
Then at $p$ we have:
\begin{eqnarray*}
\nabla_{X_i}^{\varphi}\nabla_{X_i}^{\varphi} {\mathbf H}&=&\left(X_i X_i f \right )\eta- \left( X_i f \right )A  \left (X_i \right )\\
&&- \left( X_i f \right )A  \left (X_i \right )-f \big( \nabla_{X_i} A  \left (X_i \right )+B\left( X_i, A  \left (X_i \right )\right )\big )\nonumber\\
&=&\left(X_i X_i f \right )\eta- 2\left( X_i f \right )A  \left (X_i \right )-f (\nabla A)(X_i,X_i)\\
&&-f\varepsilon \langle A(X_i),  A(X_i)\rangle  \eta \,,
\label{eq:nabla2H}
\end{eqnarray*}
where, for the second equality, we also used \eqref{eq-legame-A-B}.
Now we take the sum over $i$ as in \eqref{roughlaplacian-pseudo} and, using \eqref{key-formulas-pseudo} and Lemma~\ref{Lemma-tecnico-1}, we obtain \eqref{Delta-H-formula} (note that the sign convention for $\Delta$ and $\overline{\Delta}$ is given in \eqref{key-formulas-pseudo}, \eqref{roughlaplacian-pseudo}).
\end{proof}
Next, we assume that the mean curvature function $f$ is constant and we obtain
\begin{lemma}\label{Lemma-Delta2-H} Let $\varphi:M^m_{t'} \to  N^{m+1}_t(c)$ be a pseudo-Riemannian hypersurface and assume that its mean curvature function $f$ is equal to a constant $\alpha$. Then
\begin{equation}\label{Delta-2-H-formula}
 \overline{\Delta}^2 {\mathbf H}= \alpha \left (\varepsilon\Delta \trace A^2+ (\trace A^2)^2 \right ) \eta + 2 \alpha \varepsilon A \big(\grad \trace A^2 \big )  \,.
\end{equation}
\end{lemma}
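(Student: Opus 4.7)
The plan is to apply $\overline{\Delta}$ a second time to the expression obtained in Lemma~\ref{Lemma-Delta-H}, exploiting the hypothesis that $f\equiv\alpha$ is constant to drop three of the four terms in \eqref{Delta-H-formula}. Since $\grad f=0$ and $\Delta f=0$, Lemma~\ref{Lemma-Delta-H} collapses to
\[
\overline{\Delta}{\mathbf H}=h\,\eta\,,\qquad\text{where}\qquad h:=\alpha\,\varepsilon\,\trace A^{2}\,.
\]
Thus the task reduces to computing $\overline{\Delta}(h\,\eta)$ for an arbitrary smooth function $h\in C^\infty(M)$, which can be done by exactly the same method used in the proof of Lemma~\ref{Lemma-Delta-H}.

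I would work with a geodesic frame $\{X_i\}_{i=1}^m$ around an arbitrarily chosen point $p\in M^m_{t'}$. By Weingarten, $\nabla^{\varphi}_{X_i}\eta=-A(X_i)$, so
\[
\nabla^{\varphi}_{X_i}(h\,\eta)=X_i(h)\,\eta-h\,A(X_i)\,.
\]
Differentiating once more and using the Gauss formula \eqref{Gauss-Weingarten-eq} together with \eqref{eq-legame-A-B} to handle the term $\nabla^{\varphi}_{X_i}A(X_i)=(\nabla A)(X_i,X_i)+\varepsilon\langle A(X_i),A(X_i)\rangle\eta$ (valid at $p$ since the frame is geodesic), I obtain at $p$
\[
\nabla^{\varphi}_{X_i}\nabla^{\varphi}_{X_i}(h\,\eta)
=X_iX_i(h)\,\eta-2\,X_i(h)\,A(X_i)-h\,(\nabla A)(X_i,X_i)-h\,\varepsilon\,\langle A(X_i),A(X_i)\rangle\,\eta\,.
\]

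Summing with the signs $-\varepsilon_i$ prescribed by \eqref{roughlaplacian-pseudo}, I apply in turn the identities in \eqref{key-formulas-pseudo}, the definition \eqref{norma-A} of $\trace A^2$, and Lemma~\ref{Lemma-tecnico-1}(c), which yields $\sum_i\varepsilon_i(\nabla A)(X_i,X_i)=m\,\varepsilon\,\grad f=0$ because $f=\alpha$ is constant. The surviving contributions give the general formula
\[
\overline{\Delta}(h\,\eta)=\bigl(\Delta h+h\,\varepsilon\,\trace A^{2}\bigr)\,\eta+2\,A(\grad h)\,.
\]
Finally, substituting $h=\alpha\,\varepsilon\,\trace A^{2}$, using $\varepsilon^{2}=1$ and the linearity of $\Delta$ and $\grad$ yields
\[
\overline{\Delta}^{2}{\mathbf H}=\alpha\Bigl(\varepsilon\,\Delta\,\trace A^{2}+(\trace A^{2})^{2}\Bigr)\,\eta+2\,\alpha\,\varepsilon\,A\bigl(\grad\trace A^{2}\bigr)\,,
\]
which is \eqref{Delta-2-H-formula}.

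No serious obstacle is expected: the argument is a direct iteration of Lemma~\ref{Lemma-Delta-H}. The only delicate point is careful bookkeeping of the signs $\varepsilon$ and $\varepsilon_i$ and verifying that the $(\nabla A)$-term vanishes thanks to Lemma~\ref{Lemma-tecnico-1}(c) and the constancy of $f$; without this hypothesis, extra terms involving $\grad f$ and $\Delta f$ would appear.
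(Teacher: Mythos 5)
Your proposal is correct and follows essentially the same route as the paper: reduce to $\overline{\Delta}(\trace A^2\,\eta)$ (up to the constant factor $\alpha\varepsilon$), repeat the geodesic-frame computation of Lemma~\ref{Lemma-Delta-H}, and kill the $(\nabla A)$-term via Lemma~\ref{Lemma-tecnico-1}(c) and the constancy of $f$. The sign bookkeeping and the final substitution $h=\alpha\varepsilon\trace A^2$ with $\varepsilon^2=1$ all check out against the paper's own argument.
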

\begin{proof} Since $f$ is constant, according to Lemma\link\ref{Lemma-Delta-H} we have $\overline{\Delta}{\mathbf H}=\alpha \varepsilon \trace A^2 \eta$ and so
\[
\overline{\Delta}^2{\mathbf H}=\alpha \varepsilon\overline{\Delta}\big ( \trace A^2 \eta \big)\,.
\]
Now, around $p$:
\[
\nabla_{X_i}^{\varphi} \big ( \trace A^2 \eta\big )=\big ( X_i \trace A^2 \big )\eta -\trace A^2  A\left (X_i \right )\,.
\]
At $p$:
\begin{eqnarray*}
\nabla_{X_i}^{\varphi}\nabla_{X_i}^{\varphi} \big ( \trace A^2 \eta\big )&=&\left(X_i X_i \trace A^2 \right )\eta- \left( X_i \trace A^2 \right )A  \left (X_i \right )\\
&&- \left( X_i \trace A^2 \right )A  \left (X_i \right )-\trace A^2 \big( \nabla_{X_i} A  \left (X_i \right )+B\left( X_i, A  \left (X_i \right )\right )\big )\,.\\
\end{eqnarray*}
Next, computing as in Lemma\link\ref{Lemma-Delta-H}, we find
\[
\overline{\Delta}\big ( \trace A^2 \eta \big)=\left (\Delta \trace A^2 \right ) \eta+ 2  A \big(\grad \trace A^2\big )  +m \trace A^2 \varepsilon\grad f + \varepsilon (\trace A^2)^2 \eta
\]
and, since $f$ is constant, the proof ends immediately.
\end{proof}
We are now in the right position to prove our first theorem.

\begin{proof}[Proof of Theorem\link\ref{Th-existence-hypersurfaces-c>0-e-c<0-pseudo}] As $\trace A^2$ is constant, it follows from Lemma\link\ref{Lemma-Delta2-H} that
\begin{equation}\label{Delta-H-A2-constant}
\overline{\Delta}^2 {\mathbf H}= \alpha (\trace A^2)^2  \eta\,.
\end{equation}
Now, from the computations above, we observe that
\begin{equation}\label{Delta-eta}
\overline{\Delta}\eta=\varepsilon \trace A^2 \eta \,.
\end{equation}
Next, putting together \eqref{Delta-H-formula}, \eqref{Delta-H-A2-constant} and \eqref{Delta-eta}, we easily deduce that
\begin{equation}\label{Delta-H-potenza-p}
\overline{\Delta}^p {\mathbf H}= \alpha \,\varepsilon ^p \left(\trace A^2\right)^{p}  \eta \quad \quad \forall p \in \n^* \,.
\end{equation}
Now we are in a good position to perform the explicit calculation of the $r$-tension field $\tau_{r}(\varphi)$ described in \eqref{2s-tension}, \eqref{2s+1-tension}. We begin with $\tau_{2s}(\varphi)$, $s \geq2$. Using \eqref{Delta-H-potenza-p}, \eqref{tensor-curvature-N(c)} and computing we obtain (as in \eqref{2s-tension}, we omit to write the sum over $i$):

\begin{eqnarray*}
\frac{1}{m} \tau_{2s}(\varphi)&=&\alpha \varepsilon\left(\trace A^2\right)^{2s-1} \eta-c  \,\Big \{ \varepsilon_i\langle d\varphi(X_i),d\varphi(X_i)   \rangle \alpha \left(\trace A^2\right)^{2s-2} \eta \\
&&  - \varepsilon_i \langle d\varphi(X_i), \alpha \left(\trace A^2\right)^{2s-2} \eta   \rangle  d\varphi(X_i) \Big \}\\
&& -c\,  m\varepsilon \sum_{\ell=1}^{s-1}  \Big \{ \varepsilon_i\langle d\varphi(X_i), \alpha \left(\trace A^2\right)^{s-\ell-1} \eta   \rangle \big (  -\alpha \left(\trace A^2\right)^{s+\ell-2} A(X_i) \big )  \\
&& - \varepsilon_i\langle d\varphi(X_i), -\alpha \left(\trace A^2\right)^{s+\ell-2} A(X_i)   \rangle \alpha \left(\trace A^2\right)^{s-\ell-1} \eta \Big \}\\
&&+c\,  m \varepsilon\sum_{\ell=1}^{s-1}  \Big \{\varepsilon_i \langle d\varphi(X_i),- \alpha \left(\trace A^2\right)^{s-\ell-1} A(X_i)   \rangle  \alpha \left(\trace A^2\right)^{s+\ell-2} \eta \\
&& - \varepsilon_i\langle d\varphi(X_i), \alpha \left(\trace A^2\right)^{s+\ell-2} \eta   \rangle  \big (  -\alpha \left(\trace A^2\right)^{s-\ell-1} A(X_i) \big )   \Big \}\\
&=&\alpha\varepsilon \left(\trace A^2\right)^{2s-1} \eta-c \, m \alpha \left(\trace A^2\right)^{2s-2} \eta \\
&& -c\, \varepsilon m \Big \{ \sum_{\ell=1}^{s-1} \big [\varepsilon m \alpha^3 \left(\trace A^2\right)^{2s-3} \eta \big ] + \sum_{\ell=1}^{s-1} \big [ \varepsilon m \alpha^3 \left(\trace A^2\right)^{2s-3} \eta \big ] \Big \}\\
&=&\alpha \left(\trace A^2\right)^{2s-3} \Big \{\varepsilon \left(\trace A^2\right)^2-m \,c\, \trace A^2- (2s-2)m^2\,c\,\alpha^2 \Big \}\eta\,.
\end{eqnarray*}

This completes the proof in the case $r=2s$. The case $r=2s+1$ is similar and so we omit the details.
\end{proof}
\begin{proof}[Proof of Corollary\link\ref{Cor-no-space-like}] $\,$

Since a space-like hypersurface is Riemannian, $\trace A^2=||A||^2$. Then, either the hypersurface is totally geodesic, or it follows from \eqref{r-harmonicity-condition-in-general-pseudo} with $\varepsilon=-1$ that $M^m$ cannot be proper $r$-harmonic, thus the only possibility is that  $M^m$ is minimal, that is $\alpha=0$.
\end{proof}
\begin{proof}[Proof of Corollary\link\ref{Cor-no-space-like-bis}] Since, by assumption, $\trace A^2$ is a positive constant, condition \eqref{eq-Tr-A^2=0} does not hold. Similarly, since $\varepsilon$ and $c$ have opposite sign, equation \eqref{r-harmonicity-condition-in-general-pseudo} cannot be verified. Therefore, the only possibility is that the hypersurface is minimal.
\end{proof}
\begin{proof}[Proof of Corollary\link\ref{Cor-r-harmonic-pseudo-hyperspheres}] We use \eqref{mean-curv-fuction-pseudo}, \eqref{norma-A} and Table~\ref{Hypersurfaces-pseudo-sphere} to compute $\trace A^2=m \,(c-1)$ and $\alpha=\sqrt{c-1}$. Then the thesis follows by direct substitution in \eqref{r-harmonicity-condition-in-general-pseudo}, where the curvature of the ambient space is $1$ and also $\varepsilon=1$.
\end{proof}
\begin{proof}[Proof of Theorem\link\ref{Th-r-harmonic-pseudo-Clifford}] The hypotheses of Theorem\link\ref{Th-existence-hypersurfaces-c>0-e-c<0-pseudo} are verified and so the condition for $r$-harmonicity is equation \eqref{r-harmonicity-condition-in-general-pseudo} where, as in the proof of Corollary\link\ref{Cor-r-harmonic-pseudo-hyperspheres}, the curvature of the ambient space is $1$ and $\varepsilon=1$. Then we compute using the explicit expression for the shape operator given in Table~\ref{Hypersurfaces-pseudo-sphere}:
\begin{eqnarray*}
\trace A^2&=& k(c-1)+\frac{(m-k)}{(c-1)}\,; \\
\alpha^2&=&\frac{1}{m^2}\left (k \sqrt{c-1}-\frac{(m-k)}{ \sqrt{c-1}} \right )^2\\
&=&\frac{(c k - m)^2}{m^2 (c-1)} \,.
\end{eqnarray*}
Next, after direct substitution and simplification, we find that equation \eqref{r-harmonicity-condition-in-general-pseudo} is equivalent to
\[
(ck-m) P_3(c)=0 \,,
\]
where $P_3(c)$ is the third order polynomial defined in \eqref{condizione-pseudo-tori-Clifford}. Now, since $ ck-m=0$ corresponds to $\alpha=0$, the conclusion of the theorem follows immediately.
\end{proof}
\begin{proof}[Proof of Theorem\link\ref{Th-nostro-rigidity-2-curv}] When the curvature of the ambient is $c=1$, according to Theorem 5.1 of \cite{Abe} $M^m_{t'}$ is one of the hypersurfaces listed in Table~\ref{Hypersurfaces-pseudo-sphere}. Then a case by case direct inspection of \eqref{r-harmonicity-condition-in-general-pseudo}, using again \eqref{mean-curv-fuction-pseudo}, \eqref{norma-A} and Table\link\ref{Hypersurfaces-pseudo-sphere} to compute $\trace A^2$ and $\alpha$, shows that the only $r$-harmonic hypersurfaces in this family are those given in Corollary\link\ref{Cor-r-harmonic-pseudo-hyperspheres} and Theorem\link\ref{Th-r-harmonic-pseudo-Clifford}. In the case that $c=0$, Theorem 5.1 of \cite{Abe} says that $M^m_{t'}$ is one of the hypersurfaces listed in (R-1)--(R-6), p. 131 of \cite{Abe}. Then, again, the thesis follows easily by direct inspection.
\end{proof}
\begin{proof}[Proof of Theorem\link\ref{Th-non-existence-surfaces-c<=0}] The $3$-tension field is described by \eqref{2s+1-tension} with $s=1$. In the first part of the proof, for future reference, we do not make any assumption on the dimension $m$ and the curvature $c$. We observe that $\tau(\varphi)=m{\mathbf H}$ and use Lemma\link\ref{Lemma-Delta-H} with $f$ constant and \eqref{tensor-curvature-N(c)}. We have: 
\begin{eqnarray}\label{Curv-tensor-expr-1}\nonumber
\sum_{i=1}^m \varepsilon_i R^{N(c)} \left( \overline{\Delta}\tau(\varphi),d \varphi(X_i)\right ) d \varphi (X_i)&=&c\, m \sum_{i=1}^m \varepsilon_i \Big \{ \langle  d \varphi (X_i), d \varphi (X_i) \rangle  \overline{\Delta}{\mathbf H}\\\nonumber
&&\qquad \quad - \langle  d \varphi (X_i), \overline{\Delta}{\mathbf H}  \rangle d \varphi (X_i) \Big \}\\
&=&c\, m \big \{ m \alpha \varepsilon\,\trace A^2\, \eta - 0 \big \}\\\nonumber
&=&c\, m^2  \alpha \varepsilon\,\trace A^2 \,\eta \,.
\end{eqnarray}
Similarly, we compute 
\begin{equation}\label{Curv-tensor-expr-2}
\sum_{i=1}^m \varepsilon_i R^{N(c)} \left( \nabla^{\varphi}_{X_i} \tau(\varphi),\tau(\varphi) \right ) d \varphi (X_i)=c\, \varepsilon m^3   \alpha^3  \eta \,.
\end{equation}
Using \eqref{Delta-2-H-formula}, \eqref{Curv-tensor-expr-1} and \eqref{Curv-tensor-expr-2} into \eqref{2s+1-tension} and Lemma\link\ref{Lemma-Delta2-H} we obtain the explicit expression of the $3$-tension field:
\begin{eqnarray*}\label{tri-tension-field-explicit}
\tau_3(\varphi)&=&m \alpha \big[\varepsilon\Delta \trace A^2 + \left (\trace A^2\right)^2 -m\, c \,\varepsilon\,\trace A^2 -m^2\, c\, \varepsilon \, \alpha^2 \big ] \eta \\\nonumber
&&+ 2m\alpha \varepsilon\, A \left ( \grad \trace A^2 \right ) 
\end{eqnarray*}
Therefore, we conclude that $M^m_{t'}$ is a triharmonic hypersurface in $N^{m+1}_t(c)$ if and only if either it is minimal or
\begin{equation}\label{tri-harmonicity-system-explicit}
\left \{
\begin{array}{ll}
{\rm (i)}\quad &\Delta \trace A^2 +  \varepsilon\, \left ( \trace A^2\right)^2 -m\, c \,\trace A^2 -m^2\, c\, \alpha^2=0 \\
\\
{\rm (ii)}\quad & A \left ( \grad \trace A^2\right ) =0 \,.
\end{array}
\right .
\end{equation}
From now on, we assume that $M^m_t$ is not minimal and we use the hypothesis $\varepsilon c \leq 0$. First, we analyse the case $c=0$. If $\trace A^2$ is constant, then it follows immediately from \eqref{tri-harmonicity-system-explicit} that $ \left ( \trace A^2\right)^2=0$ and so, since $A$ is diagonalizable, the hypersurface is totally geodesic, a contradiction. If $\trace A^2$ is not a constant, then there exists an open set $U$ of the surface such that $\grad \trace A^2\neq 0$ on $U$.  We deduce from \eqref{tri-harmonicity-system-explicit}(ii) that $0$ is an eigenvalue of $A$ on $U$. Now we use the assumption that $m=2$. Since $M^2_{t'}$ is CMC, it is easy to conclude that necessarily $\trace A^2$ is again a constant on $U$, and this is a contradiction. Next, if $c\neq0$, by the assumption $\varepsilon c \leq 0$ we deduce that $c$ and $\varepsilon$ have opposite sign. Then, using this observation in \eqref{tri-harmonicity-system-explicit} (i), the proof follows essentially the same argument as in the case $c=0$.
\end{proof}
\begin{proof}[Proof of Theorem\link\ref{Th-structure-surfaces-c>0}] First, from an argument similar to the proof of Theorem\link\ref{Th-non-existence-surfaces-c<=0}, we deduce that $\trace A^2$ must be a constant on $M^2_{t'}$ and also the two principal curvatures are constant. 

Then, according to Theorem\link\ref{Th-nostro-rigidity-2-curv}, we deduce that the only possibility is that $M^2_{t'}$ is an open part of a small pseudo-hypersphere $\s^2_t(3)$ because, according to Theorem\link\ref{Th-r-harmonic-pseudo-Clifford}, in these dimensions there exists no generalised pseudo-Clifford torus which is proper triharmonic. Indeed, using $k=1, r=3$ and $m=2$, \eqref{condizione-pseudo-tori-Clifford} becomes
\[
P_3(c)=c^3-5 c^2+9 c-6\,.
\]
Now, the only real root of $P_3(c)$ is $c=2$ and it corresponds to the minimal pseudo-Clifford torus.
\end{proof}
\begin{proof}[Proof of Theorem\link\ref{Th-Xiao-c=-1}] 

We have to find under what conditions on $a,b$ the complex circle $x(s,t)$ is $r$-harmonic. By using standard techniques of the theory of surfaces in $\H^{3}_1\subset \R^4_2$, we can compute the shape operator of the complex circle $x(s,t)$ and obtain
\[
A=\frac{1}{a^2+b^2}\begin{bmatrix}
 2 ab & 1 \\
 &\\
 -1& 2 ab  \\
\end{bmatrix}\,.
\]
%
%

The condition $ab\neq 0$ ensures that $\trace A\neq 0$, that is $M^2_1$ is not minimal. 

The case $r=2$. From Theorem~\ref{Th-existence-hypersurfaces-c>0-e-c<0-pseudo} we deduce that $M^2_1$ is biharmonic if and only if $\trace A^2+2=0$. Now, a direct computation gives
\[
\trace A^2+2= \frac{16 b^2 (b^2-1)}{(a^2+b^2)^2}\,,
\]
from which, since $b^2-1=a^2$, we deduce that $M^2_1$ cannot be proper biharmonic. 

The case $r>2$. In this case, using Theorem~\ref{Th-existence-hypersurfaces-c>0-e-c<0-pseudo}, we have two possibilities, that is:
(i) either
\[
0=\trace A^2=2 \,\frac{4a^2b^2-1}{(a^2+b^2)^2}\,;
\]
(ii) or $4a^2b^2-1\neq 0$ and, according to \eqref{r-harmonicity-condition-in-general-pseudo},
\[
\begin{split}
0&=\left(\trace A^2\right )^2+2\,\left (\trace A^2\right )+(r-2) (\trace A)^2\\
&=\frac{16 b^2 \left(b^2-1\right)
   \left(\left(a^2+b^2\right)^2
   r-4\right)}{\left(a^2+b^2\right)^4
   }\,.
\end{split}
\]

Case (i), together with the hypothesis $b^2-a^2=1$, give exactly  point (1) of the theorem.  As for Case (ii), we have a proper solution if and only if 
\[
1<a^2+b^2=\frac{2}{\sqrt{r}}
\]
from which we deduce that the only possible value is $r=3$. Taking into account that, by assumption, $b^2-a^2=1$, it is easy to conclude Case (2) of the theorem.
\end{proof}
\begin{proof}[Proof of Theorem\link\ref{Th-Sasah-r>2}] Case (i). Here we assume that the shape operator $A$ is diagonalizable. Since the hypersurface has dimension $m=2$ and constant principal curvatures, we can apply Theorem\link\ref{Th-nostro-rigidity-2-curv}. Moreover, taking into account Remark\link\ref{Rem-basta-c>0}, the proper $r$-harmonic surfaces in $\H^3_1$ can be deduced from those of $\s^3_2$. Putting these facts together and using \eqref{eq-k=m-k} we obtain Cases {$(1)-(4)$} in the statement of the theorem.

Case (ii). Now, we assume that the shape operator $A$ is non-diagonalizable. In this case, $t'=1$ and so $\varepsilon =1$. If $A$ has a double, real eigenvalue $\lambda$, then the argument of \cite[Proposition\link4.1]{Alias} enables us to conclude that $M^2_1$ is a $B$-scroll in $N^3_1(c)$ over a null curve $\gamma$, and its shape operator $A$, with respect to the coordinate frame field $\left \{ \partial \slash \partial s,\,\partial \slash \partial u \right \}$ associated to \eqref{eq-B-Scroll}, is given by
\[
A= \left [
\begin{array}{cc}
\lambda &0 \\
k(s)&\lambda
\end{array}
\right ]
\]
with $k(s) \neq 0$. Therefore, $\trace A^2= 2 \lambda^2$ and $\alpha= \lambda$.
Next, we assume that the $B$-scroll is not minimal, i.e., $\lambda \neq 0$, and we apply \eqref{r-harmonicity-condition-in-general-pseudo}: if  $c=-1,\,m=2, \,\varepsilon=1$ then there are no solutions; if $c=1,\,m=2, \,\varepsilon=1$, then it is easy to deduce that the $B$-scroll is proper $r$-harmonic if and only if
\[
\lambda^2 =r-1 \,.
\]
Finally, observing that the Gauss curvature $K$ of the $B$-scroll is given by $K=\det (A)+c$, with $c=1$, we conclude Case (5). 

It remains to analyse the case that the shape operator $A$ has two complex eigenvalues. This will lead us to Case (6). Indeed, from \cite[p.~453]{Alias}, we conclude that $M^2_1$ is a flat Lorentzian surface in $\H^3_1$ with parallel second fundamental form in $\R^4_2$. Then, according to \cite{Magid2}, $M^2_1$ is locally congruent to a complex circle and therefore the conclusion follows from Theorem~\ref{Th-Xiao-c=-1}.
\end{proof}

\appendix
\section{} \label{Appendix}
In this appendix we describe how to construct the $B$-scrolls in $\s^3_1 \subset \R^4_1$ described in Example\link\ref{Example-B-scroll}. We follow exactly the method given in the Appendix of \cite{Kim}. 

First, we introduce the following matrices:
\begin{equation}\label{Eq-matrici-E-T-appendix}
 E=\left [
 \begin{array}{rccc}
 -1 &0&0&0\\
 0&1&0&0\\
  0&0&1&0\\
   0&0&0&1
  \end{array}
 \right ]\,, \,\,\quad 
 T=\left [
 \begin{array}{rrcc}
 0&-1&0&0\\
 -1&0&0&0\\
  0&0&1&0\\
   0&0&0&1
  \end{array}
 \right ]\,. 
 \end{equation} 
 Next, we introduce a $4 \times 4$ matrix $X(s)$:
\begin{equation*}\label{eq-X-appendix}
X(s)= [A(s),B(s),C(s),\gamma(s)]\,,
\end{equation*} 
where the first $3$ columns will be the Cartan frame field $\{A,B,C \}$ along $\gamma$. Let
\begin{equation*}\label{eq-M-appendix}
 M(s)=\left [
 \begin{array}{rrrc}
 0&0&-\lambda&1\\
 0&0&-k(s)&0\\
  -k(s)&-\lambda&0&0\\
   0&1&0&0
  \end{array}
 \right ] 
\end{equation*}
and $X(s)$ be the unique solution of the Cauchy problem associated to
\begin{equation}\label{eq-diff-eq-X-appendix}
X'(s)=X(s)\,M(s) 
\end{equation}
with initial condition $X(0)=[A(0),B(0),C(0),\gamma(0)]$ satisfying
\begin{equation}\label{eq-cond-iniz}
X(0){}^t\,E \,X(0) =T \,,
\end{equation}
where the matrices $E,T$ were defined in \eqref{Eq-matrici-E-T-appendix}. As shown in \cite{Kim}, we conclude that \eqref{eq-Cartan-frame} holds and $\{A(s),B(s),C(s) \}$ is a Cartan frame field along the null curve $\gamma(s)$ in $ \s^3_1  \subset \R^4_1$. Finally, we set
\[
X(0)=
 \left [
 \begin{array}{cccc}
 1&1&0&1\\
 1&0&0&1\\
  0&1&0&1\\
   0&0&1&0
  \end{array}
 \right ]\,.
\]
It is easy to check that \eqref{eq-cond-iniz} holds. Then, letting $k(s)\equiv 1$ and solving \eqref{eq-diff-eq-X-appendix} with the aid of the software Mathematica${}^{\footnotesize \textregistered} $, we find the explicit $r$-harmonic $B$-scrolls in $\s^3_1$ described at the end of Example\link\ref{Example-B-scroll}. 
\begin{remark}\label{Remark-appendix}Finally, we point out that the method of this Appendix does not require $k(s) \neq 0$. If we assume that the smooth function $k(s)$ vanishes at an isolated point, say $k(s_0)=0$, we have a very interesting phenomenon. The associated $B$-scroll is still a proper $r$-harmonic surface in $\s^3_1$ provided that $\lambda^2=r-1$. Indeed, on the whole surface we have $\trace A= 2 \lambda$ and $\trace A^2= 2 \lambda^2$ and so equation \eqref{r-harmonicity-condition-in-general-pseudo} applies and $r$-harmonicity follows exactly as in the proof of Theorem\link\ref{Th-Sasah-r>2}. But, in this case, the minimal polynomial of $A$ is $(x-\lambda)$ at all points $x(s_0,u)$, while it is $(x-\lambda)^2$ elsewhere. Therefore, these proper $r$-harmonic surfaces in $\s^3_1$ \textit{are not isoparametric}. By contrast, in the Riemannian case, all the known proper $r$-harmonic CMC hypersurfaces with $\trace A^2$ constant are isoparametric (see \cite{MOR-space forms}). 
\end{remark}

\end{document}